\def\numberwithin#1#2{\@ifundefined{c@#1}{\@nocnterrr}{%
  \@ifundefined{c@#2}{\@nocnterr}{%
  \@addtoreset{#1}{#2}%
  \toks@\expandafter\expandafter\expandafter{\csname the#1\endcsname}%
  \expandafter\xdef\csname the#1\endcsname
    {\expandafter\noexpand\csname the#2\endcsname
     .\the\toks@}}}}
\numberwithin{equation}{section}
\newcommand{\diag}{\operatorname{diag}}
\newcommand{\Ad}{\operatorname{Ad}}
\newcommand{\tr}{\operatorname{tr}}
\newcommand{\Ric}{\operatorname{Ric}}
\newcommand{\hess}{\operatorname{Hess}\,}
\newcommand{\rank}{\operatorname{rank}}
\newcommand{\dd}{\mathbf{d}}
\newcommand{\qq}{\mathbf{q}}
\newcommand{\pp}{\mathbf{p}}
\newcommand{\ww}{\mathbf{w}}
\newcommand{\vv}{\mathbf{v}}
\newcommand{\aaa}{\mathbf{a}}
\newcommand{\cc}{\mathbf{c}}
\theoremstyle{plain} \newtheorem{theo}{Theorem}[section]
\theoremstyle{remark} \newtheorem{rem}[theo]{\textbf{Remark}}
\theoremstyle{plain} 
\theoremstyle{plain} \newtheorem{lem}[theo]{Lemma}
\theoremstyle{plain} 
\theoremstyle{plain} \newtheorem{pro}[theo]{Proposition}
\theoremstyle{definition} 
\theoremstyle{definition} \newtheorem*{notation}{Notation}
\theoremstyle{definition} \newtheorem*{ackn}{Acknowledgements}
\begin{document}

\title{Painlev\'e Analysis of Ricci solitons over warped products} %Title of paper

\author{Alejandro Betancourt de la Parra}
\address{Centro de Investigaci\'on en Matem\'aticas, A.C., Jalisco S/N, Col. Valenciana CP: 36023 Guanajuato, Gto, M\'exico}
\email{alejandro.betancourt@cimat.mx}
\date{revised \today}

\begin{abstract}
We carry out a Painlev\'e analysis to find the cases where the cohomogeneity one steady Ricci soliton equation can be integrable. We concentrate on two classes of solitons: warped products and complex line bundles over a Fano K\"ahler-Einstein base. For warped products, the analysis singles out the case with one factor where the dimension of the hypersurface is a perfect square, with the $n=4$ particularly distinguished. The case with two factors each of dimension $2$ is also singled out by the analysis. In the case of complex line bundles, a 1-parameter family is singled out for every even dimension.
\end{abstract}

\maketitle %\maketitle must follow title, authors, abstract and \pacs

\section{Introduction}
A gradient Ricci soliton (or GRS for short) is a triple $(M,g,u)$, where $M$ is a smooth manifold (which we assume without boundary but possibly non-compact), $g$ a Riemannian metric, and $u$ a smooth function such that the identity
\begin{equation}\label{soleq}
 \Ric + \hess (u) +  \lambda g=0,
\end{equation} 
is satisfied, where $\Ric$ is the Ricci tensor of $g$, $\hess (u)$ is the Hessian with respect to the Levi-Civita connection, and $\lambda$ is a real number. The soliton is called expanding, steady, or shrinking whenever $\lambda$ is positive, zero, or negative, respectively. The function $u$ appearing in the definition is called soliton potential and is determined up to a constant. Ricci solitons play a fundamental role in understanding of singularity formation of the Ricci flow.

The soliton equation \eqref{soleq} constitutes a natural generalization of the Einstein equation. As such, methods originally devised to study and produce examples of Einstein metrics are often useful in the context of Ricci solitons. A natural approach to Einstein metrics is to assume a symmetry condition to reduce the equation to a more manageable form. The most symmetric case, that of homogeneous spaces, has been studied extensively and numerous existence and non-existence results are well known today (see for example \cite{besse}). In contrast to the Einstein case, all homogeneous gradient Ricci solitons are rigid, meaning that they are Einstein if compact, or a quotient of an Einstein metric times the Gaussian soliton if non-compact. The most symmetric nontrivial gradient Ricci solitons are of cohomogeneity one, that is, they have a group acting by isometries with a one dimensional orbit space. This is precisely the setting in which the soliton equation was studied in \cite{DW2011, BDW2016}. 

Another important difference Einstein metrics and GRS's is that there are various examples of explicit closed form formulas for the metric in the former case whereas there are very few examples of the latter. In particular, most of these explicit solitons are of K\"ahler type. To our knowledge, the only explicit non-K\"ahler solitons are those in \cite{BDW2016}, where explicit formulas for the Bryant soliton and for solitons over double warped products are given (in both cases the formulas only exist in dimension 5). These formulas were obtained by reformulating the soliton equation as a Hamiltonian system with constraint. This Hamiltonian is obtained from Perelman's $\mathcal{F}$ and $\mathcal{W}$ functionals \cite{perelman1} in a manner analogous to the Hamiltonian formulation of General Realtivity in the ADM formalism \cite{DW2001}. 

The objective of the present paper is to analyze the Hamiltonian system corresponding to the cohomogeneity one steady GRS equation for multiple warped products and for the B\'erard Bergery ansatz (see section \ref{setup}) in order to determine whether there are more cases where the equations can be integrated explicitly. Steady solitons of these forms were shown to exist by Bryant \cite{bryant}, Ivey \cite{Ivey1994}, Cao \cite{Cao1996}, and Dancer and Wang \cite{DW2009,DW2011}. In \cite{BDW2016} we managed to obtain explicit formulas for the Bryant soliton and for doubly warped products when the dimension of the underlying manifold is 5. In the case of the Bryant soliton, the key to obtaining the formula was finding a non-trivial conserved quantity for the Hamiltonian. For the doubly warped product, the main element was the construction of a superpotential (i.e., a solution of the Hamilton-Jacobi equation). In both cases, the constructions depended crucially on the dimension being exactly 5. In this paper we employ the technique of Painlev\'e analysis to identify the cases where the Hamiltonian system is integrable. One expects to find explicit closed form formulas for steady GRS's precisely in these cases. 

Before continuing, we recall that the Painlev\'e test was first pioneered by S. Kowalewski in the nineteenth century on her work on spinning tops and more recently formalized in \cite{ARS1980,ARS1980b}. The idea behind this procedure is to identify the cases in which a system of ODE's has solutions that have the Painlev\'e property, that is, where the general solution is meromorphic with movable singularities (we assume the system to be complex). The existence of such solutions is strongly associated with integrability of the system. The steps in this procedure are summarized as follows: 

\begin{enumerate}
	\item Assume that there exists a meromorphic solution and determine the\textit{ leading order} terms.
	\item Find the \textit{resonances} of the system, that is, the steps of the expansion at which free parameters may enter.
	\item Check the \textit{compatibility} conditions at each resonance to verify that the recursion relation can be solved at every step.
	\item Verify that the series \textit{converges} in a punctured neighborhood of the singularity.
\end{enumerate}
It is usually said that a system passes the Painlev\'e test if the general solution is meromorphic with movable singularities. In this case, the series are expansions in integer powers of some variable $t$ and have a maximal number of free parameters. We will refer to this property as the \textit{strong Painlev\'e property}. We will also consider series expansions which are meromorphic on the variable $z=t^{1/q}$ for an integer $q$. If such series have a maximal number of free parameters we will say that the system has the \textit{weak Painlev\'e property}.

We describe the contents of this paper. In section \ref{setup} we recall the Hamiltonian formulation from \cite{BDW2016} and introduce new variables which are more amenable to Painlv\'e analysis. We obtain systems of ODE's that correspond to multiple warped products and the B\'erard Bergery ansatz respectively.

Sections \ref{onefactor} and \ref{arbfactor} are dedicated to the analysis of the systems corresponding to warped product metrics with one and an arbitrary number of factors, respectively. In the first case, we distinguish two families of series solutions only one of which passes the weak Painlev\'e test (whenever the dimension of the underlying manifold is of the form $n^2+1$). Furthermore, this family passes the strong test when $n=2$. In the next section we show there are various families of series solutions which can be parametrized by the set of rational points on a certain ellipsoid. None of these series has the weak Painlev\'e property, however the case where there are two factors, each of dimension 2, is singled out by the analysis.

In section \ref{berard} we analyze the B\'erard Bergery case. We show that the corresponding system fails the weak Painlev\'e test. As before, the analysis singles out a one parameter family of solutions which suggests that a subsystem of the original may be integrable.

Finally, section \ref{conclusions} discusses the convergence of the formal expansions of previous sections as well as the geometric properties of the corresponding solitons. We mention that the analogous study of the integrability properties of the Einstein equations using Painlev\'e analysis has been carried out in \cite{DW2001}, \cite{DW2001b}, \cite{DW2003}, and \cite{DW2003b}.

\begin{ackn}
I would like to thank Prof. Andrew Dancer for suggesting the problem and for thoroughly revising this work. My gratitude also goes to Prof. McKenzie Wang for his helpful comments.
\end{ackn}

\section{Setting up the equations} \label{setup}
We consider a Riemannian manifold $(M,g)$ of dimension $N=n+1>2$ with an isometric cohomogeneity one action by a compact Lie group $G$. In this setting the generic orbit is of the form $G/K$ for a closed subgroup $K$ and has dimension $n$. We further assume there exists a singular orbit of the form $G/H$ for a closed subgroup $K\subset H $. We denote by $M_0$ the subset of $M$ consisting of generic orbits, so that $M_0$ is diffeomorphic to $I \times G/K$ for an open interval $I$. Recall that a cohomogeneity one manifold can have at most two singular orbits; in our setting one is placed without loss of generality at $t=0$, and $M$ is compact if and only if there is a second singular orbit at another $t>0$. On $M_0$ the metric $g$ can be written as
$$g=dt^2+g_t,$$
where the parameter $t$ is the distance to the singular orbit at $t=0$, or equivalently, the length of a geodesic that intersects the generic orbits orthogonally. For the time being, we will only study solutions to the soliton equation on $M_0$. There are additional conditions that need to be satisfied at the singular orbits in order for the metric to extend to all of $M$ (cf. section 2 in \cite{EW2000}). Likewise, if there is only one singular orbit there are conditions that guarantee that $g$ is complete.\\

We can think of $g_t$ as a family of $G$-invariant metrics on $G/K$. By fixing an $\Ad(K)$-invariant decomposition of the Lie algebra
$$ \mathfrak{g}=\mathfrak{k}\oplus \mathfrak{p}$$
(so that $\mathfrak{p}$ is isomorphic to $T_{[K]}G/K$ through the exponential map) we can identify such metrics with $\Ad(K)$-invariant metrics on $\mathfrak{p}$. If we fix a background $\Ad(K)$-invariant metric $Q$ on $\mathfrak{p}$, $g_t$ can be identified with a family $q_t$ of symmetric, positive definite, $\Ad(K)$-invariant endomorphisms of $\mathfrak{p}$ by taking $g_t$ on $T_{[K]}G/K$ to correspond to $Q\circ q_t$ on $\mathfrak{p}$. We denote such endomorphisms by $S_+^2(\mathfrak{p})^K$. A Ricci soliton can then be identified with a path $(q_t,u(t))$ in the configuration space $\mathscr{C}:= S_+^2(\mathfrak{p})^K\times \mathbb{R}$. Here we are assuming that the soliton potential $u$ only depends on $t$ (this is always satisfied in the cohomogeneity one setting).\\

The Hamiltonian in \cite{BDW2016} was defined on the cotangent bundle of the configuration space $T^*\mathscr{C}$. The intuition behind its definition is that the integral curves of the canonical equations (lying on the zero energy set) correspond, under the inverse Legendre transformation, to the Euler-Lagrange equations of a certain Lagrangian on $T\mathscr{C}$. In the case of steady solitons this Lagrangian is derived from Perelman's $\mathcal{F}$ energy functional, which plays a role akin to that of the total scalar curvature for Einstein metrics. We give an account of this construction in what follows.\\

First, we will assume that  the isotropy representation of $G/K$ splits into pairwise inequivalent irreducible subrepresentations. This is not necessary to obtain the Hamiltonian, but it simplifies computations considerably. In this case we can decompose $\mathfrak{p}$ into irreducible, $\Ad(K)$-invariant, $Q$-orthogonal summands 
$$ \mathfrak{p}=\mathfrak{p}_1 \oplus \cdots \oplus \mathfrak{p}_r,$$
each of which has dimension $d_i=\dim \, \mathfrak{p}_i$. This decomposition is unique up to isotypical summands and allows us to introduce coordinates on $\mathscr{C}$ by writing each $q \in S_+^2(\mathfrak{p})^K$ as $q=\diag (e^{q_1}I_{d_1}, \ldots ,e^{q_r}I_{d_r}$), where $I_{d_i}$ is the identity operator on $\mathfrak{p_i}$. This induces global coordinates $(q_1, \ldots, q_r, u, p_1, \ldots , p_r, \phi)$ on $T^* \mathscr{C}$, where $\pp=( p_1, \ldots , p_r, \phi)$ are the conjugate momenta associated to $\qq=(q_1, \ldots, q_r, u)$. These coordinates can be used to write the Hamiltonian down explicitly. Before doing so, we recall that the scalar curvature of $G/K$ with respect to $g_t$ can be written (cf. equation (1.3) in \cite{WZ1986}) in terms of $\qq=\qq(t)$ as
$$ S(\qq)  =\sum_{\ww \in \mathcal{W}} A_\ww e^{\ww \cdot \qq},$$
where $\mathcal{W}$ is a set of weight vectors in $\mathbb{Z}^{r+1}$ depending only on $G/K$ and $A_\ww$ are constants. Weight vectors have at most three nonzero entries and can be of three types:
\begin{enumerate}
	\item Type I vectors, where the $i$-th entry is -1 and the last is zero. We denote this vector as $(-1^{(i)},0)$. For type I vectors $A_\ww>0$.
	\item Type II vectors, where the $i$-th entry is 1, the $j$-th and $k$-th are -1, and the last one is zero. We denote this vector as $(1^{(i)},-1^{(j)},-1^{(k)},0)$. Type II vectors have $A_\ww<0$.
	\item Type III vectors, where the $i$-th entry is -1 and the $j$-th is -2. We denote this vector by  $(1^{(i)},-2^{(j)},0)$. Type III vectors have $A_\ww<0$.
\end{enumerate}
In this setting, the Hamiltonian \cite{BDW2016} corresponding to a cohomogeneity one steady soliton is given by
\begin{equation}\label{hamiltonian}
H(\qq,\pp)= e^{-\frac{1}{2} \dd \cdot \qq} J(\pp)-e^{\frac{1}{2} \dd \cdot \qq} \left( E+\sum_{\ww \in \mathcal{W}} A_\ww e^{\ww \cdot \qq}\right),
\end{equation}
where $E$ is a constant (it's exact value is unimportant for now, but it coincides with the Perelman energy $E=\mathcal{F}(g,u)$ on steady solitons), $\dd=(d_1, \ldots , d_r, -2)$, the generic orbit has dimension $n=d_1+\ldots+d_r$, and 
$$ J(\pp)= -\left( \sum_{i=1}^r \frac{p_i^2}{d_i}+ \sum_{i=1}^r p_i\phi + \frac{n-1}{4}\phi^2\right).$$
Solutions of the canonical equations for $H$ lie on level sets of the Hamiltonian. As we will see, those lying on $H^{-1}(0)$ correspond to solutions of the steady Ricci soliton equation on $M_0$.\\

The cohomogeneity one Ricci soliton equation is equivalent to the system of ODE's
\begin{align}
r_t-\dot{L_t}-(\tr L_t- \dot{u})L_t+\lambda I=& 0 \label{primera}\\
-\tr(L_t^2)-\tr (\dot{L_t})+\ddot{u}+\lambda=& 0 \label{segunda}\\
d(\tr L_t)+ \delta^{\nabla^t} L_t =& 0, \label{tercera}
\end{align}
where $r_t$ is the Ricci endomorphism of $g_t$, $L_t$ is the shape operator of the hypersurface $\{ t \} \times (G/K)$, and $\delta^\nabla: T^*(G/K) \otimes T\,(G/K)\rightarrow T\,(G/K)$ is the codifferential. This system is obtained by evaluating \eqref{soleq} respectively on tangent, normal, and mixed directions with respect to the orbits $G/K$ (cf. \cite{DW2011}). Theorem 5 in \cite{BDW2016} states that equations \eqref{primera}-\eqref{segunda} correspond to the canonical equations of $H$ lying on the zero energy set $\{H=0 \}$.
\begin{pro}
Consider the Hamiltonian $H$ in \eqref{hamiltonian} defined on the cotangent space $T^*\mathscr{C}$ of the configuration space. The integral curves of the canonical equations of $H$ lying on the $\{H=0 \}$ set correspond, under the inverse Legendre transform, to solutions of the tangent and normal parts of the Ricci soliton equation \eqref{primera}-\eqref{segunda}.
\end{pro}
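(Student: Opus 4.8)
The plan is to run the construction of $H$ in reverse: I would compute the canonical equations, invert the Legendre transform to obtain a second-order Lagrangian system on $T\mathscr{C}$, recognize that Lagrangian as the cohomogeneity one reduction of Perelman's $\mathcal{F}$ functional, and then match its Euler--Lagrange equations term by term with the tangent and normal parts \eqref{primera}--\eqref{segunda}. The restriction to $\{H=0\}$ I would treat separately, as the Hamiltonian (Gauss-type) constraint coming from the arc-length gauge, exactly as in the ADM reduction of the Einstein equations \cite{DW2001}.

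First I would write the velocity half of the canonical equations,
\[
\dot q_i=-e^{-\frac{1}{2}\dd\cdot\qq}\Big(\tfrac{2p_i}{d_i}+\phi\Big),\qquad
\dot u=-e^{-\frac{1}{2}\dd\cdot\qq}\Big(\sum_{j}p_j+\tfrac{n-1}{2}\phi\Big).
\]
Since $J$ is a nondegenerate quadratic form in $\pp$, these solve uniquely for $\pp=\pp(\qq,\dot\qq)$; this is the inverse Legendre transform. Substituting into $\pp\cdot\dot\qq-H$ returns a Lagrangian of the schematic form $e^{\frac{1}{2}\dd\cdot\qq}\big(\tfrac12\langle\dot\qq,\dot\qq\rangle_G+E+S\big)$, in which the prefactor $e^{\frac{1}{2}\dd\cdot\qq}=e^{\frac{1}{2}\sum_i d_iq_i-u}$ is precisely the weighted volume element $e^{-u}\,dV_{g_t}$ of $\mathcal{F}$ and $\langle\cdot,\cdot\rangle_G$ is the (indefinite) De Witt metric recorded by $J$. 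Reading the displayed equations geometrically, $p_i$ becomes an affine combination of the shape-operator eigenvalues of $L_t$ (the eigenvalue on $\mathfrak{p}_i$ being $\tfrac12\dot q_i$) and $\phi$ encodes $\dot u$ together with $\tr L_t$.

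With the momenta eliminated, the equations $\dot p_i=-\partial H/\partial q_i$ and $\dot\phi=-\partial H/\partial u$ become second order in $\qq$, and I would identify them with \eqref{primera}--\eqref{segunda}. The essential geometric input is the Wang--Ziller expression $S(\qq)=\sum_{\ww\in\mathcal{W}}A_\ww e^{\ww\cdot\qq}$ (cf. \cite{WZ1986}) together with the variational identity by which $\partial S/\partial q_i$ reproduces, up to normalization by $d_i$, the eigenvalues of the Ricci endomorphism $r_t$ on $\mathfrak{p}_i$. Because all tensors are diagonal in the $Q$-orthogonal splitting $\mathfrak{p}=\mathfrak{p}_1\oplus\cdots\oplus\mathfrak{p}_r$, the tensorial equation \eqref{primera} decouples into $r$ scalar equations, which are exactly the $q_i$ Euler--Lagrange equations; the single scalar equation \eqref{segunda} comes from the $u$ equation, with $\ddot u$ supplied by $\dot\phi$ and the $\tr(L_t^2),\tr\dot L_t$ terms by differentiating the two exponentials. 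Finally, since $H$ carries no explicit dependence on $t$ it is conserved, so $\{H=0\}$ is flow-invariant; the relation $J(\pp)=e^{\dd\cdot\qq}(E+S)$ it enforces is algebraic in the velocities and is the Gauss/Hamiltonian constraint dropped when the lapse was fixed to unity, simultaneously pinning $E=\mathcal{F}(g,u)$.

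The formal Legendre inversion is routine; the main obstacle will be the geometric bookkeeping in the previous paragraph. Concretely, I expect the delicate points to be: verifying that $\partial S/\partial q_i$ yields the Ricci eigenvalues with the correct constant (this is where the signs attached to the three weight types and the factors $d_i$ must be consistent with equation (1.3) of \cite{WZ1986}), and tracking the weighted-volume factor $e^{\pm\frac{1}{2}\dd\cdot\qq}$ through every $q_i$- and $u$-derivative so that the reduced flow lands on \eqref{primera}--\eqref{segunda} in the arc-length parameter rather than a conformally rescaled variant. Sorting out precisely how the conserved energy $\{H=0\}$ interacts with \eqref{segunda} and the trace of \eqref{primera} via the contracted Bianchi (soliton) identity is the remaining subtlety, and it is essentially the content of Theorem 5 of \cite{BDW2016}, whose computation I would follow.
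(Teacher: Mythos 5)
Your plan is essentially the paper's own treatment: the paper offers no independent proof of this proposition, but simply invokes Theorem 5 of \cite{BDW2016}, whose content is exactly the ADM-style argument you outline (invert the Legendre transform using the nondegeneracy of $J$, recognize the resulting Lagrangian as the cohomogeneity one reduction of Perelman's $\mathcal{F}$ with weight $e^{\frac{1}{2}\dd\cdot\qq}=e^{-u}\,dV_{g_t}$, match Euler--Lagrange equations with \eqref{primera}--\eqref{segunda}, and interpret $\{H=0\}$ as the conserved Gauss-type constraint). Your velocity equations and the identification of the momenta with the shape-operator data are correct, so the proposal is sound and follows the same route, deferring the detailed bookkeeping to the computation in \cite{BDW2016} just as the paper does.
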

On the other hand, the equation for mixed directions \eqref{tercera} is automatically satisfied whenever \eqref{primera}-\eqref{segunda} hold and we can further establish the $C^3$ regularity of the solution (cf. Proposition 3.19 in \cite{DW2011}). One of the advantages of looking at \eqref{primera}-\eqref{tercera} is that these equations are valid not just for the cohomogeneity one setting but for manifolds containing a dense open set foliated by an equidistant family of hypersurfaces where $L$, $r$, and $u$ depend only on $t$. This is satisfied when $M$ is a multiple warped product of Einstein manifolds of positive scalar curvature or on the so called B\'erard Bergery ansatz to be discussed in the following sections.

\subsection{Warped products}
We start by considering the Hamiltonian when the hypersurface is a product of Einstein manifolds $(M_i,g_i)$ of positive scalar curvature. Examples of such solitons have been found by Bryant \cite{bryant}, Ivey \cite{Ivey1994}, and more recently Dancer and Wang \cite{DW2009}. In what follows, we let $d_i \geq 2$ be the dimension of $M_i$. We write the metric on $I\times M_1 \times \ldots \times M_r$ as
\begin{equation}\label{metric}
 g=dt^2+g_t=dt^2+ e^{q_1} g_1 + \ldots + e^{q_r} g_r.
\end{equation}
In this case $\mathcal{W}$ is composed of the type I vectors $\ww_i=(-1^{(i)},0)$ for $1\leq i \leq r$. We will introduce new variables so the canonical equations of $H$ take a form that is more suitable for Painlev\'e analysis. In particular, we will get rid of the exponential terms and obtain a polynomial system with quadratic nonlinearities analogous to \cite{DW2001}. In what follows we can assume without loss of generality that $A_i=E=1$ for all $i$.

Given the form of the Hamiltonian, it is natural to introduce 
\begin{equation*}
\begin{aligned}
x_i&:=e^{\ww_i \cdot \qq} \qquad \textnormal{for } 1\leq i \leq r \\
x_{r+1}&:= e^{\frac{1}{2}\dd \cdot \qq}
\end{aligned}
\end{equation*}
to get rid of the exponential terms. Define the matrix whose rows are
$$ U:=\left( \begin{array}{c} 
\ww_1 \\ \vdots \\ \ww_r \\ \dd /2
\end{array} \right), $$
so change of coordinates above is given by $x_i= e^{U_{i k}q_{k}} $  for $1 \leq i \leq r+1$. Here we are making the identification $q_{r+1}=u$ and using the Einstein convention for summation to simplify the notation. The corresponding momentum coordinates are given by
\begin{equation} \label{conjugatemomenta}
 y_i:= \frac{\sum_{k=1}^{r+1} p_k U^{ki}}{x_i}
\end{equation}
where $U^{ki}$ denotes the $k,i^{th}$-entry of the inverse matrix $U^{-1}$. As before, here we are making the identification $p_{k+1}=\phi$. If we set $M:= UJU^T$, the kinetic energy term $J_{ij}p_i p_j$ in the Hamiltonian \eqref{hamiltonian} becomes $M_{kl}x_k y_k x_ly_l$. Furthermore, notice that $M$ is a diagonal matrix $M=\diag(-d_1^{-1},\ldots, -d_r^{-1},1/4)$, so the Hamiltonian \eqref{hamiltonian} takes the form
\begin{equation*}
H= -\frac{1}{x_{r+1}} \left( \sum_{k=1}^r  \frac{x_k^2 y_k^2}{d_k} \right) +\frac{x_{r+1}y_{r+1}^2}{4} - x_{r+1} \left(\sum_{k=1}^r x_k +1 \right).
\end{equation*}
In these coordinates the canonical equations become
\begin{equation} \label{canonical}
\begin{array}{lll}
\dot{x}_i  = \displaystyle -\frac{2x_i^2 y_i}{d_i x_{r+1}} & \quad \dot{y}_i =\displaystyle \frac{2x_i y_i^2}{d_i x_{r+1}}+x_{r+1} &  \textnormal{ for } \quad 1\leq i \leq r \\[4mm]
\dot{x}_{r+1} = \displaystyle\frac{x_{r+1} \, y_{r+1}}{2} & \quad \dot{y}_{r+1} =-\displaystyle \sum_{k=1}^r \frac{x_k^2 y_k^2}{d_k x_{r+1}^2}-\frac{y_{r+1}^2}{4}+\sum_{k=1}^r x_k +1. \\
\end{array}
\end{equation}
The equations in this form are still too complicated. To simplify them further we note two things. First, we are only interested in solutions lying on the level set $\{ H=0 \}$. The last equation in \eqref{canonical} is greatly simplified by this fact. Second, we can introduce the auxiliary variables
\begin{align*}
u_i&:= \frac{x_i\, y_i}{d_i x_{r+1}} \quad \textnormal{for } 1\leq i \leq r \\[2mm]
u_{r+1}&:=\frac{y_{r+1}}{2}.
\end{align*}
to rewrite both $H$ and \eqref{canonical} in a more manageable form. In these new variables the Hamiltonian takes the much simpler form
\begin{equation*}
H=-x_{r+1}\left( \sum_{k=1}^r d_k u_k^2 -u_{r+1}^2+ \sum_{k=1}^r x_k +1 \right),
\end{equation*}
and the system \eqref{canonical} restricted to $H=0$, becomes
\begin{equation} \label{xu}
\begin{array}{llc}
\dot{x}_i  =-2x_i u_i & \quad \dot{u}_i =-u_i u_{r+1}+\displaystyle\frac{1}{d_i} x_i &  \textnormal{ for } \quad 1\leq i \leq r \\[3mm]
\dot{x}_{r+1} = x_{r+1} \, u_{r+1} & \quad \dot{u}_{r+1} =-\displaystyle\sum_{k=1}^{r}d_k u_k^2. \\
\end{array}
\end{equation}
Note that $x_{r+1}$ is an exponential so it never vanishes on $I$ (although the limit as $t\rightarrow 0$ can be zero). Thus $H=0$ if and only if $ \sum_{k=1}^r d_k u_k^2 -u_{r+1}^2+ \sum_{k=1}^r x_k +1 $ vanishes. It will be useful in the future to notice this expression is constant on solutions of \eqref{xu}. Also, note that the system above is no longer Hamiltonian since the introduction of the $u_i$'s is not a symplectic change of variables. The upside is that it is now suitable for Painlev\'e analysis because the nonlinearity is quadratic and therefore formal series solutions will converge in a sufficiently small punctured disc (see section \ref{conclusions} for details). System \eqref{xu} can be regarded as the Ricci soliton analogue of (2.3)-(2.6) in \cite{DW2001}. One important difference is that here we are using the geodesic length $t$ as our independent variable, whereas in the Einstein case a new independent variable $s$ that satisfies $ds=e^{-\dd \cdot \qq/2}dt$ was used. In the Ricci soliton case this change of variables actually makes the system more complicated because it involves inverting an exponential integral function, which might destroy the Painlev\'e property of the resulting system.  \\

\subsection{The Berard Bergery ansatz}
The Hamiltonian formulation holds for manifolds where the hypersurface is not a warped product. One of the simplest examples of this is the B\'erard Bergery ansatz, where the hypersurface is the total space $P$ of an $S^1$-bundle $\pi: P \rightarrow B$ over a Fano K\"ahler-Einstein base $(B,g_B)$. Cohomogeneity one Einstein metrics generalizing the Page metric \cite{Pa1978} were constructed in such bundles by B\'erard Bergery in \cite{BB1982} (these metrics are not K\"ahler but conformally K\"ahler). K\"ahler-Ricci solitons have been also constructed in this context by Koiso and Cao \cite{Koi1990, Cao1996}, and Feldman, Ilmanen, and Knopf \cite{FIK2003} (see also \cite{Wink2017, Sto2017, Appleton2017} for recent developments on non-K\"ahler solitons on these bundles). In this case the extended dimension vector is $\dd=(1,d_2,-2)$ with $d_2$ even and the metric on $P$ has the form
$$ \bar{g}=dt^2+g_t=dt^2+e^{q_1} \theta \otimes \theta + e^{q_2} \pi^* g_B, $$
where $\theta$ is a principal $U(1)$ connection on $P$ (cf. \cite{PP1987,DW2011}). Using equation (1.3) in \cite{WZ1986} the scalar curvature of $g_t$ is given by
$$ S=A_2\, e^{-e_{q_2}}+A_3\, e^{q_1-2q_2},$$
where $A_2>0$ and $A_3<0$ are constants. Thus, $\mathcal{W}$ is composed of the vectors $\ww_1=(0,-1,0)$, $\ww_2=(1,-2,0)$. As before, we can introduce the variables $x_1= (A_2/E) e^{\ww_1 \cdot \qq} $, $x_2= (A_3/E)   e^{\ww_2 \cdot \qq}$, and $x_3= E e^{\frac{1}{2}\dd\cdot \qq}$ so that 
$$ U= \left( \begin{array}{ccc} 0 &-1 & 0\\ 1 & -2 & 0 \\ 1/2 & d_2/2 & -1 \end{array}  \right).$$
The corresponding conjugate momenta $y_i$ are defined by \eqref{conjugatemomenta} and $M=UJU^T$ is now given by
$$ M= \left( \begin{array}{ccc}
-1/d_2 & -2/d_2 & 0 \\
-2/d_2 & -1-4/d_2 & 0 \\
0 & 0 & 1/4
\end{array}       \right) .$$
Thus, the Hamiltonian in these variables becomes
$$ H=-\frac{1}{d_2 \, x_3}\big( x_1^2 \, y_1^2+ 4 x_1 x_2 \, y_1 y_2 + (d_2+4) x_2^2 y_2^2 \big)+\frac{x_3 y_3^2}{4}-x_3 (x_1+x_2+1). $$
To simplify the Painlev\'e analysis we introduce a new set of auxiliary variables $v_1:=(x_1y_1+2x_2y_2)/(d_2x_3)$, $v_2:=x_2y_2/x_3$, and $v_3:=y_3/2$, so that the canonical equations associated to the Hamiltonian above become
\begin{equation} \label{xv}
\begin{array}{llc}
\dot{x}_1  =-2x_1 v_1 & \quad \dot{v}_1 =-v_1 v_{3}+\displaystyle\frac{1}{d_2}( x_1+2x_2) \\[3mm]
\dot{x}_2  =-2x_2 (2v_1+v_2) & \quad \dot{v}_2 =-v_2 v_{3}+ x_2 \\[3mm]
\dot{x}_{3} = x_{3} \, u_{3} & \quad \dot{v}_{3} =-d_2 v_1^2-v_2^2. \\
\end{array}
\end{equation}
In these variables the $H=0$ condition becomes 
$$H=-x_3(d_2v_1^2+v_2^2-v_3^3+x_1+x_2+1)=0.$$

We proceed to carry out a Painlev\'e analysis of \eqref{xu} first. System \eqref{xv} will be analyzed in section \ref{berard}. We begin with the simpler case $r=1$ since many of its features will be useful to study the more general case.

\section{Products with one factor} \label{onefactor}
We begin the analysis of system \eqref{xu} in the case where there is only one factor, i.e. $r=1$. As mentioned in the introduction, the first step is to assume the equations are complex and find the leading order terms of the ansatz expansions. We take expansions around $t_0=0$ of the form
\begin{align*}
x_1=a_0 t^\alpha +\ldots  & \qquad u_1= c_0 t^\gamma +\ldots \\ x_2=b_0 t^\beta+\ldots & \qquad u_2= e_0 t^\epsilon +\ldots ,
\end{align*}
where we are assuming $t$ to be a new complex variable that is not related to the parameter used in the previous section (so $t$ has no geometric meaning). Recall that the position of the singularity at $t_0=0$ is arbitrary and actually constitutes a free parameter of the system. Substituting in \eqref{xu} we see that the terms which are potentially leading on each side of the corresponding equation are given by 
\begin{align}
a_0 \alpha \, t^{\alpha -1} \, &: \, -2 a_0 c_0 \, t^{\alpha + \gamma} \label{a} \\
b_0 \beta \, t^{\beta -1} \, &: \, b_0 e_0 \, t^{\beta + \epsilon} \label{b}\\
c_0 \gamma \, t^{\gamma -1 } \, &: \, -c_0 e_0 \, t^{\gamma + \epsilon}, \,\, d_1^{-1}a_0 \, t^{\alpha} \label{c}\\
e_0 \epsilon \, t^{\epsilon -1 } \, &: \, -d_1 c_0^2 \, t^{2\gamma}. \label{d}
\end{align}
The following proposition shows the admissible values of both the leading exponents and coefficients.
\begin{pro}
The leading order terms satisfy $\gamma=-1$, $\epsilon=-1$, and one of the following two possibilities:
\begin{equation}\label{uno}\begin{array}{llll}
a_0= d_1(d_1-1) & c_0 = 1 & \alpha=-2 \\
b_0=\textnormal{Arbitrary} & e_0=d_1 & \beta = d_1,
\end{array}
\end{equation}
or
\begin{equation}\label{dos}\begin{array}{llll}
a_0=\textnormal{Arbitrary} & c_0 = \pm 1/\sqrt d_1 & \alpha=\mp 2/ \sqrt d_1 \\
b_0=\textnormal{Arbitrary} & e_0=1 & \beta = 1.
\end{array}
\end{equation}
\end{pro}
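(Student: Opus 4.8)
The plan is to fix the four leading exponents first, by matching powers of $t$ in \eqref{a}--\eqref{d}, and only afterwards to read off the coefficients $a_0,b_0,c_0,e_0$. Each of the three equations \eqref{a}, \eqref{b}, \eqref{d}, coming respectively from $\dot x_1=-2x_1u_1$, $\dot x_2=x_2u_2$, and $\dot u_2=-d_1u_1^2$, contains exactly two candidate leading terms, so in any dominant balance these two must cancel against one another. Matching powers in \eqref{a} gives $\alpha-1=\alpha+\gamma$, hence $\gamma=-1$; matching powers in \eqref{b} gives $\beta-1=\beta+\epsilon$, hence $\epsilon=-1$; and \eqref{d} forces $\epsilon-1=2\gamma$, which is automatically consistent with $\gamma=\epsilon=-1$. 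Throughout I assume the leading coefficients are nonzero, since this is what permits the cancellations and in particular lets me divide by $a_0$, $b_0$, $c_0$ when extracting the coefficient relations.

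The genuinely interesting equation is \eqref{c}, arising from $\dot u_1=-u_1u_2+d_1^{-1}x_1$, because its right-hand side offers \emph{two} potential leading contributions, of orders $\gamma+\epsilon=-2$ and $\alpha$, competing against the derivative term of order $\gamma-1=-2$. This is where the argument branches: either (i) all three terms share the order $-2$, forcing $\alpha=-2$, or (ii) the $d_1^{-1}x_1$ term is strictly subdominant, i.e. $\alpha>-2$, so that only the two terms of order $-2$ participate. I will treat these possibilities separately, and they will produce precisely the families \eqref{uno} and \eqref{dos}.

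In each branch the remaining work is bookkeeping of coefficient relations. The two-term balances already supply the universal relations $\alpha=-2c_0$ (from \eqref{a}), $\beta=e_0$ (from \eqref{b}), and $e_0=d_1c_0^2$ (from \eqref{d}), together with the observation that $b_0$ is never constrained and hence free in both branches. In branch (i) I set $\alpha=-2$ in $\alpha=-2c_0$ to obtain $c_0=1$, whence $e_0=d_1$ and $\beta=d_1$, and the full three-term balance in \eqref{c} reads $c_0\gamma=-c_0e_0+d_1^{-1}a_0$, which solves to $a_0=d_1(d_1-1)$; this is \eqref{uno}. In branch (ii) the balance in \eqref{c} reduces to $c_0\gamma=-c_0e_0$, giving $e_0=1$, hence $c_0=\pm d_1^{-1/2}$ from $e_0=d_1c_0^2$, together with $\alpha=\mp2d_1^{-1/2}$, $\beta=1$, and $a_0$ now unconstrained and free; this is \eqref{dos}.

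The main point requiring care is the \emph{self-consistency} of each branch rather than any hard computation. In branch (ii) I must verify that the defining hypothesis $\alpha>-2$ actually holds at the values obtained: since $d_1\ge2$ one has $|\alpha|=2d_1^{-1/2}\le\sqrt2<2$, so $\alpha>-2$ for either sign and the $x_1$ term is legitimately subdominant. The two branches are genuinely disjoint, since $c_0=1$ in (i) forces $e_0=d_1\ne1$ for $d_1\ge2$. Finally, no other dominant balances survive: each of \eqref{a}, \eqref{b}, \eqref{d} has only two terms and hence a unique balance, while in \eqref{c} the derivative and the $u_1u_2$ term both sit at the fixed order $\gamma-1=\gamma+\epsilon=-2$, so any admissible balance must retain both of them, leaving only the alternatives $\alpha=-2$ and $\alpha>-2$ already treated. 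Assembling the two branches yields exactly \eqref{uno} and \eqref{dos}.
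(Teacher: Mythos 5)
Your branch structure (three-term vs.\ two-term balance in \eqref{c}) and all of the coefficient bookkeeping match the paper's proof, but there is a genuine gap at your very first step: the claim that a two-term equation forces its two candidate leading terms to balance against one another is false when the leading exponent of the differentiated variable vanishes. In \eqref{a} the derivative contributes $a_0\alpha\,t^{\alpha-1}$, whose coefficient is $a_0\alpha$; if $\alpha=0$ this term is absent even though $a_0\neq0$, and then \eqref{a} forces only $\gamma>-1$ rather than $\gamma=-1$ (the right-hand side must instead be matched by later, less singular terms of $\dot x_1$). Your blanket assumption that ``the leading coefficients are nonzero'' does not address this, because the coefficient that vanishes is $a_0\alpha$, not $a_0$. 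Nor is this a vacuous case: in the multi-factor analysis of the same paper (Lemma \ref{leadorder}), vanishing exponents $\alpha_i=0$ genuinely occur and come with $\gamma_i=+1$, not $-1$. This is precisely why the paper's proof spends its entire first paragraph ruling out $\alpha=0$ when $r=1$: if $\alpha=0$, then $\gamma>-1$ by \eqref{a}, hence $\epsilon>-1$ by \eqref{d}, hence $\beta=0$ by \eqref{b}, and then \eqref{c}--\eqref{d} force $\gamma,\epsilon\geq0$, so the expansion is not singular at all --- contradicting the standing assumption of a movable singularity at $t_0=0$. Without this step your derivation of $\gamma=-1$ is incomplete.

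The same loophole occurs, less damagingly, in your use of \eqref{b}: if $\beta=0$ the balance $\beta-1=\beta+\epsilon$ is not forced, so your derivation of $\epsilon=-1$ also has a degenerate case to exclude. The paper sidesteps this by extracting $\epsilon=-1$ from \eqref{d} instead, once $\gamma=-1$ is known: the right-hand side of \eqref{d} is $-d_1c_0^2\,t^{2\gamma}$, a square with strictly nonzero coefficient, so it cannot drop out and must be matched by $\dot u_2$, which forces $\epsilon=-1$; the relation $\beta=e_0$ is then read off from \eqref{b} afterwards (where $\beta=0$ is now impossible, since the right-hand side of \eqref{b} sits at order $t^{-1}$ with nonzero coefficient). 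Aside from these degenerate balances, your treatment of \eqref{c} --- including the explicit check $|\alpha|=2/\sqrt{d_1}\leq\sqrt 2<2$ in branch (ii), which the paper leaves implicit --- and the resulting coefficient relations reproducing \eqref{uno} and \eqref{dos} are correct.
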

\begin{proof}
First we establish that $\alpha \neq 0$. By contradiction, if $\alpha=0$ then \eqref{a} shows that $\gamma>-1$ (otherwise the negative exponent on the right hand side can not be matched by the left). Then, $\epsilon>-1$ by \eqref{d} and using \eqref{b} we see that $\beta=0$. Using \eqref{c} and \eqref{d} it is easy to deduce that $\gamma, \epsilon \geq 0$ and therefore the expansions are not singular. \\
\indent Given $\alpha \neq 0$, we deduce $\gamma=-1$ and $\epsilon=-1$ from \eqref{a} and \eqref{d} respectively. From \eqref{c} we see that $\alpha \geq -2$. If $\alpha=-2$, it is easy to see that the rest of the constants are given by \eqref{uno}. If $\alpha >-2$, \eqref{c} implies $e_0=1$; subsequently \eqref{d} yields $c_0=\pm1/\sqrt{d_1}$. Finally, $\alpha=\mp 2/\sqrt{d_1}$ and $\beta=d_1$ follow easily.
\end{proof}
In what follows we will have to analyze the series corresponding to the two different possibilities singled out in the previous proposition. These two cases can be distinguished by the value of the exponent $\alpha$. In the next section, when we analyze the system for $r>1$, the leading order terms will be classified by the leading exponent of $x_1$ in an analogous way.
\begin{rem}
Already at this stage we can see that in the case where $\alpha>-2$ we need $d_1$ to be perfect square in order to obtain series on a rational power of $t$. Also, notice that $a_0$ (in the $\alpha >-2$ case) and $b_0$ are arbitrary and therefore can be regarded as free parameters of the system. The arbitrariness of $b_0$ comes from the fact that the soliton potential is determined up to a constant. In particular, by looking at the definition of $x_2$ we see that a translation of the potential $u\mapsto u+c$ for some constant $c$ implies that $b_0\mapsto e^{-c}\, b_0$.
\end{rem}
\begin{rem}
It is possible to consider solutions which have poles at infinity. Such solutions have poles at $s=0$ for the new variable $s=1/t$. It is easy to show no such expansions exist. 
\end{rem}

\subsection{The $\alpha=-2$ case}
We now proceed to find the resonances for system \eqref{xu} using the leading order terms given in \eqref{uno}. Recall that resonances are the steps in the expansion at which free parameters may enter. To this end we take series of the form
\begin{align*}
x_1=\sum_{i=0}^\infty a_i t^{-2+i}  & \qquad u_1= \sum_{i=0}^\infty c_i t^{-1+i} \\  x_2=\sum_{i=0}^\infty b_i t^{d_1+i} & \qquad u_2= \sum_{i=0}^\infty e_i t^{-1+i}.
\end{align*}
Substituting in \eqref{xu} and reordering terms with the same exponents yields the following recursion relation for $i\geq 1$:
\begin{equation}\label{recr1}
\left(\begin{array}{cccc}
i & 0 & 2d_1(d_1-1) & 0 \\
0 & i & 0 & -b_0 \\
-1/d_1 & 0 & i+(d_1-1) & 1 \\
0 & 0 & 2d_1 & i-1
\end{array} \right) 
\left( \! \! \begin{array}{c}
 a_i \\ b_i \\ c_i \\ e_i \end{array} \! \! \right)=
\left( \begin{array}{c}
-2 \sum_{k=1}^{i-1} a_{i-k} c_k \\  \sum_{k=1}^{i-1} b_{i-k} e_k \\ - \sum_{k=1}^{i-1} c_{i-k} e_k \\ -d_1  \sum_{k=1}^{i-1} c_{i-k} c_k 
\end{array} \right).
\end{equation}
We denote the matrix on the left hand side of \eqref{recr1} by $X(i)$ and the vector by $\vv_i$. We have resonances whenever $\det X(i)=0$. Remember that we are interested in finding out the cases where the resonances are either integers or rational. A simple computation shows that this determinant can be factorized as 
$$ \det X(i)= i(i+d_1-1)(i+1)(i-2), $$
so that resonances occur at $i=-d_1+1, -1, 0,$ and $2$. Thus, resonances are always integers regardless of the value of $b_0$ or $d_1$. The first is negative and strictly less than -1 when $d_1\geq 3$. It has no geometric interpretation. The resonance at $i=-1$ corresponds to the arbitrariness of the initial parameter $t_0$, and the resonance at $i=0$ corresponds to the arbitrariness of $b_0$. In order to get a formal series solution we need to verify that compatibility conditions are met at the top resonance $i=2$. 

\begin{pro}\label{r=1}
Compatibility conditions hold at the top resonance $i=2$ and therefore the corresponding ansatz yields a formal a series solution of \eqref{xu}. Moreover, exactly one free parameter enters the expansion at this step. 
\end{pro}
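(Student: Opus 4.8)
The plan is to use the standard resonance-compatibility mechanism. A resonance at $i=2$ means $\det X(2)=0$, so the linear system $X(2)\vv_2=\vv$, with $\vv$ the forcing vector on the right of \eqref{recr1} evaluated at $i=2$, is solvable precisely when $\vv$ lies in the column space of $X(2)$, equivalently when $\vv$ is annihilated by every left null vector of $X(2)$. The first thing I would do is record this single linear functional (the left kernel of $X(2)$) whose vanishing on $\vv$ is the compatibility condition to be checked.

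The key observation that makes the condition hold is that the forcing vector at $i=2$ is actually zero. To see this I would first run the recursion at the lower step $i=1$. Every convolution sum on the right-hand side of \eqref{recr1} ranges over $k=1,\dots,i-1$, so at $i=1$ these sums are empty and the forcing vector vanishes; since $\det X(1)=1\cdot d_1\cdot 2\cdot(-1)=-2d_1\neq 0$, the matrix $X(1)$ is invertible and hence $\vv_1=0$, i.e. $a_1=b_1=c_1=e_1=0$. At $i=2$ each of the four sums $\sum_{k=1}^{1}$ collapses to a single term built from these first-order coefficients (namely $a_1c_1$, $b_1e_1$, $c_1e_1$, $c_1c_1$), all of which vanish. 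Thus the forcing at $i=2$ is the zero vector, the compatibility condition holds trivially, and the recursion admits a solution $\vv_2$.

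For the parameter count I would compute $\dim\ker X(2)$ directly, since with zero forcing the general $\vv_2$ is exactly an element of $\ker X(2)$, so the number of free parameters entering equals $\dim\ker X(2)$. Taking $c_2$ as a parameter, rows $1$, $4$ and $2$ give $a_2=-d_1(d_1-1)c_2$, $e_2=-2d_1c_2$ and $b_2=-b_0 d_1 c_2$; substituting into row $3$ produces $\big((d_1-1)+(d_1+1)-2d_1\big)c_2=0$, which holds identically. This identical consistency of the third row — which is precisely what makes $i=2$ a resonance — shows that the kernel is one-dimensional, so exactly one free parameter $c_2$ enters at this step.

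The main obstacle I would anticipate is conceptual rather than computational: one expects compatibility at a resonance to hinge on a genuine and possibly delicate cancellation in the source term. The crucial point that dissolves this difficulty is the vanishing of all first-order coefficients, which forces the entire $i=2$ forcing vector to zero before any cancellation needs to be examined. The only real work remaining is verifying that $\ker X(2)$ is one-dimensional, and this reduces to checking that the third row of $X(2)$ is a consequence of the others, i.e. the very algebraic fact underlying the resonance at $i=2$.
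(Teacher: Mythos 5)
Your proposal is correct and follows essentially the same route as the paper's proof: both establish $\vv_1=0$ (empty sums plus invertibility of $X(1)$), deduce that the forcing at $i=2$ vanishes so compatibility holds trivially, and then identify the one free parameter with the one-dimensional kernel of $X(2)$. Your explicit kernel vector $(-d_1(d_1-1),-b_0d_1,1,-2d_1)$ is just a rescaling of the paper's spanning vector $(d_1-1,b_0,-1/d_1,2)$, so the two arguments agree in every respect.
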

\begin{proof}
It suffices to compute the first two steps of the recursion. Notice that at step $i=1$ the right hand side of \eqref{recr1} vanishes, so $\vv_1=0$. Likewise, the right hand side vanishes at step $i=2$, which readily implies that compatibility conditions are met at the resonance. To see that one free parameter enters the expansion at the top resonance notice that $\rank X(2)=3$. In particular $\vv_2$ can be set to be any element of $\ker X(2)$. This kernel is spanned by $(d_1-1,b_0,-1/d_1,2)$. 
\end{proof}

Recall from the previous section that only solutions where $H=0$ correspond to smooth metrics. It is interesting to note the free parameter at the resonance $i=2$ can be chosen so that the resulting formal solution lies on this level set. To see this, recall the condition $H=0$ is equivalent to $d_1 u_1^2-u_2^2+x_1+1=0$. The coefficients for $t^{k}$ in this expression are given by\\

$t^{-2}: d_1c_0^2-e_0^2+a_0=0. $\\

$t^{-1}: 2d_1c_0c_1-2e_0e_1+a_1=0.$\\

$t^{0}: d_1(2c_0c_2+c_1^2)-(2e_0e_2+e_1^2)+a_2+1.$\\

To get a solution on $H=0$ the last coefficient must be zero. It is easy to see that taking $\vv_2=\lambda (d_1-1,b_0,-1/d_1,2)$ with $\lambda =1/3(d_1+1)$ makes the coefficient of $t^{0}$ vanish.\\

The proposition above gives a 3-parameter family of formal series solutions of \eqref{xu} in the case $r=1$. The general solution of this system has four free parameters so this family does not pass the full Painlev\'e test. In the context of Ricci solitons, the free parameter that we get at the top resonance is lost by restricting the solution to $H=0$, so we have a 2-parameter family of solutions which correspond to steady Ricci solitons (although this family actually defines the same soliton since the free parameters are $t_0$ and $b_0$, and we have seen that these variables have no geometric meaning).

\subsection{The $\alpha= \mp 2/\sqrt{d_1}$ case}
Now we analyze the resonances for the expansion where the leading exponent of $x_1$ is given by $\alpha=\mp 2/\sqrt{d_1}$. As stated above, to obtain expansions on a rational power of $t$ we will require $d_1$ to be a perfect square. The rest of the leading terms are given in \eqref{dos}. Thus, we will consider series of the form
\begin{align*}
x_1=\sum_{i=0}^\infty a_i t^{\alpha+iQ}  & \qquad u_1= \sum_{i=0}^\infty c_i t^{-1+iQ}  \\  x_2=\sum_{i=0}^\infty b_i t^{1+iQ} & \qquad u_2= \sum_{i=0}^\infty e_i t^{-1+iQ},
\end{align*}
where $Q=2/\sqrt{d_1}$. Substituting into \eqref{xu} and collecting terms with the same exponents we arrive at the recursion relation: 
\begin{equation}\label{recr2}
\left(\begin{array}{cccc}
iQ & 0 & 2a_0 & 0 \\
0 & iQ & 0 & -b_0 \\
0 & 0 & iQ & \pm 1/\sqrt{d_1} \\
0 & 0 & \pm 2\sqrt{d_1} & iQ-1
\end{array} \right) 
\left( \! \! \begin{array}{c}
 a_i \\ b_i \\ c_i \\ e_i \end{array} \! \! \right)=
\left( \begin{array}{c}
-2 \sum_{k=1}^{i-1} a_{i-k} c_k \\  \sum_{k=1}^{i-1} b_{i-k} e_k \\ - \sum_{k=1}^{i-1} c_{i-k} e_k + d_1^{-1}a_{i-\sqrt{d_1}\pm 1}\\ -d_1  \sum_{k=1}^{i-1} c_{i-k} c_k 
\end{array} \right)
\end{equation}
(we adopt the convention that $a_k=0$ when $k<0$). To simplify the notation in what follows it will be useful to set $\iota=iQ$. As before, denote the matrix on the left by $X(\iota)$ and the column vector by $\vv_i$. We will get resonances at the values of $i$ where $\det X(\iota)$ vanishes. It is easy to see that $\det X(\iota)$ factorizes as
$$ \det X(\iota)=\iota^2 (\iota+1)(\iota-2), $$
so we have resonances at $\iota=-1,0,0,2$. Notice that resonances are integers and do not depend on the value of the free parameters $a_0,b_0$, the dimension $d_1$, or the sign of $\alpha$. The first three resonances correspond to the free parameters $t_0, a_0$, and $b_0$, respectively. To pass the (weak) Painlev\'e test we need to verify that compatibility conditions hold at the top resonance $\iota=2$.
\begin{pro}
Compatibility conditions for \eqref{recr2} hold at the top resonance $\iota=2$. Moreover, exactly one free parameter enters the expansion at this resonance. This free parameter can be chosen so that the formal series solution lies on $H=0$. 
\end{pro}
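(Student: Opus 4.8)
The plan is to follow the strategy of Proposition~\ref{r=1}: run the recursion \eqref{recr2} from the leading order up to the top resonance and check solvability there. Writing $m:=\sqrt{d_1}$ and $Q=2/m$, the top resonance $\iota=2$ now falls at the integer step $i=m$, and for $1\le i\le m-1$ one has $\iota=2i/m\in(0,2)$, so that $\det X(\iota)=\iota^2(\iota+1)(\iota-2)\neq0$ and every intermediate coefficient is uniquely fixed by the lower-order data. Thus the whole proposition reduces to two checks at $i=m$: that the right-hand side $\mathbf{R}_m$ of \eqref{recr2} lies in the image of the singular matrix $X(2)$ (compatibility), and that $\ker X(2)$ is one-dimensional (exactly one free parameter).

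First I would record the linear algebra of $X(2)$. Row reduction shows its rank is exactly $3$: the bottom-right block $\left(\begin{smallmatrix}2 & \pm1/\sqrt{d_1}\\ \pm2\sqrt{d_1} & 1\end{smallmatrix}\right)$ is singular of rank one, while the first two columns are independent. Hence $\ker X(2)$ is spanned by a single vector, which a direct solve identifies as $(\pm a_0,\ \sqrt{d_1}\,b_0,\ \mp1,\ 2\sqrt{d_1})$, so exactly one free parameter $\mu$ enters at the resonance. The left null vector, determined by $\ww X(2)=0$, is $\ww=(0,0,\mp\sqrt{d_1},1)$; the key structural feature is that its first two entries vanish, so the compatibility condition $\ww\cdot\mathbf{R}_m=0$ constrains only the third and fourth rows, which come from the $u_1$- and $u_2$-equations.

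The heart of the matter is then a bookkeeping of which intermediate coefficients are nonzero, governed by the inhomogeneous term $d_1^{-1}a_{i-m\pm1}$. For the lower sign this term never activates for $i\le m$, so an induction exactly as in Proposition~\ref{r=1} gives $a_i=b_i=c_i=e_i=0$ for $1\le i\le m-1$, whence $\mathbf{R}_m=0$. For the upper sign it first activates at $i=m-1$, producing a single nonzero layer of coefficients. When $m\ge3$ this layer sits at $i=m-1\ge2$, distinct from the vanishing first layer, so in forming $\mathbf{R}_m$ the convolutions $\sum_{k=1}^{m-1}$ pair it only against vanishing layers and the residual shift term $d_1^{-1}a_1$ is zero; thus $\mathbf{R}_m=0$ again. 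The remaining case, the upper sign with $m=2$ (that is, $d_1=4$), is the genuinely delicate one and I expect it to be the main obstacle: the special layer now coincides with the first step $i=1$, so the convolution $\sum_{k=1}^{1}$ need not vanish. An explicit computation of the $i=1$ step gives $a_1=c_1=0$ but $e_1,b_1\neq0$, so $\mathbf{R}_2=(0,b_1e_1,0,0)$ has its single nonzero entry in the second slot, exactly where $\ww$ vanishes. Hence $\ww\cdot\mathbf{R}_m=0$ in all cases and compatibility holds.

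Finally, for the claim that $\mu$ can be tuned onto $H=0$, I would use that $\mathcal{E}:=d_1u_1^2-u_2^2+x_1+1$ is constant along \eqref{xu}; for a formal solution this forces $\mathcal{E}$ to equal its $t^0$-coefficient $[\mathcal{E}]_0$, so $H=0$ is equivalent to $[\mathcal{E}]_0=0$. The parameter $\mu$ enters $[\mathcal{E}]_0$ only through the cross terms $2d_1c_0c_m-2e_0e_m$, so with $c_0=\pm1/\sqrt{d_1}$, $e_0=1$ and the kernel generator above ($\kappa_c=\mp1$, $\kappa_e=2\sqrt{d_1}$) one finds $\partial[\mathcal{E}]_0/\partial\mu=2d_1c_0\kappa_c-2e_0\kappa_e=-2\sqrt{d_1}-4\sqrt{d_1}=-6\sqrt{d_1}\neq0$. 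Thus $[\mathcal{E}]_0$ is a nonconstant affine function of $\mu$ and a unique choice of $\mu$ makes it vanish, placing the formal series on $H=0$.
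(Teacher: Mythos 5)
Your proof is correct, and its core is the same as the paper's: the identical case split (sign of $\alpha$, then $d_1>4$ versus $d_1=4$), the same induction showing all intermediate layers vanish except the one activated at $i=\sqrt{d_1}-1$ when $\alpha<0$, the same rank-$3$ computation for $X(2)$, and a kernel generator agreeing with the paper's up to the scale $1/\sqrt{d_1}$. Two points are packaged differently and are worth recording. First, you test compatibility by orthogonality to the left null vector $\ww=(0,0,\mp\sqrt{d_1},1)$, while the paper checks membership in the image of $X(2)$ directly (for $d_1=4$ it writes down the full solution set of the step-$2$ system). These are equivalent, but your remark that the first two entries of $\ww$ vanish isolates the structural reason the delicate $d_1=4$ case works: the only nonzero entry of $\mathbf{R}_2=(0,b_1e_1,0,0)$ lands exactly where $\ww$ is zero. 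Second, for the $H=0$ claim the paper exhibits explicit multiples of the kernel vector, namely $\lambda=1/6$ (and $\lambda=(4-a_0^2)/48$ when $d_1=4$), whereas you argue structurally: $\mathcal{E}=d_1u_1^2-u_2^2+x_1+1$ is a first integral of \eqref{xu}, hence constant as a formal series, so only its $t^0$-coefficient must be killed, and that coefficient is affine in the resonance parameter with slope $-6\sqrt{d_1}\neq 0$. Your route avoids computing particular solutions and disposes of all nonzero-power coefficients of $\mathcal{E}$ at once (the paper relies on the same conservation law, noted in its Section 2, but leaves this implicit here); the paper's route buys explicit values, and one can check they agree with your slope computation, e.g. $1-6\lambda=0$ gives $\lambda=1/6$ in the $d_1>4$ cases.
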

\begin{proof}
As before, we compute the first steps of the recursion. The right hand side of \eqref{recr2} behaves differently depending on the sign of $\alpha$ so we consider both cases separately. We do the case $\alpha >0$ first (so we get minus signs in $X(\iota)$). At step $i=1$, the right hand side vanishes (we are using the convention $a_k=0$ if $k<0$) and so $\vv_1=0$. By induction, $\vv_i=0$ for $1 \leq i <  \sqrt{d_1}$. At the resonance $i=\sqrt{d_1}$ the right hand vanishes so compatibility conditions are satisfied. We have $\rank X(2)=3$ so one free parameter enters the expansion. The kernel $\ker X(2)$ is spanned by $(-a_0/\sqrt{d_1},b_0, 1/\sqrt{d_1},2)$. Multiplying the latter vector by $1/6$ gives us a value for $\vv_{\sqrt{d_1}}$ such that the solution lies on $H=0$. \\

If we choose $\alpha <0$ the analysis is slightly more complicated because we need to consider two different subcases. First, assume that $d_1 > 4$. As before, the right hand side of the recursion vanishes at $i=1$, so $\vv_1=0$. By induction $\vv_i=0$ for $1 \leq i <\sqrt{d_1}-1$. At $i=\sqrt{d_1}-1$ the right hand side of \eqref{recr2} ceases to vanish (the third entry is non zero), so $\vv_{\sqrt{d_1}-1}\neq 0$. When $i=\sqrt{d_1}$ we have a resonance where the right hand side of the recursion vanishes (since all the nonzero coefficients in the sums appear multiplied by zero). Thus, compatibility conditions are satisfied. Again, $\rank X(2)=3$ and $\ker X(2)$ is spanned by $(a_0/\sqrt{d_1},b_0,-1/\sqrt{d_1},2)$. Multiplying this vector by 1/6 yields a solution where $H=0$.\\

If $d_1=4$ the right hand side of \eqref{recr2} at step $i=1$ is $(0,0,a_0/4,0)$ and one must solve the system in order to get $\vv_1=(0,a_0 b_0/2,0,a_0/2)$. The right hand side at the resonance $i=2$ is given by $\left(0, a_0^2\,b_0/4 ,0 ,0  \right)$. This vector is in the image of $X(2)$ and therefore compatibility conditions are satisfied. Indeed, $\rank X(2)=3$ and solutions of the system at step $i=2$ are of the form
\begin{equation}\label{alexakis}
 \left( 0, \frac{a_0^2 \, b_0}{8},0,0  \right) + \lambda \bigg( a_0, 2b_0, -1,4 \bigg).
\end{equation}
for $\lambda \in \mathbb{R}$. By taking $\lambda=(4-a_0^2)/48$ we obtain solutions that lie on $H=0$. 
\end{proof}
This proposition shows that expansions corresponding to the choice of leading order terms \eqref{dos} yield formal solutions of \eqref{xu} regardless of the sign of $\alpha$. Moreover, these solutions have 4 free parameters: $t_0$, $a_0$, $b_0$, and the free parameter entering at the top resonance. Assuming they converge, the series in this family satisfy the weak Painlev\'e property for every perfect square $d_1$. Additionally, $Q=1$ when $d_1=4$ so the system passes the strong Painlev\'e test in this case. In terms of Ricci solitons, (after taking into account that $t_0$ and $b_0$ are geometrically meaningless), these solutions correspond to a 1-parameter family of steady Ricci solitons.
\begin{rem}
We have yet to attach any geometric meaning to the free parameter $a_0$. We will go back to this issue in further detail in Section \ref{conclusions}.
\end{rem}

\section{Products with multiple factors} \label{arbfactor}
We now turn our attention to warped products with multiple factors, that is, when $r\geq 2$. The analysis in this case retains many of the features from the $r=1$ setting, however the presence of more factors means there is a wider choice of possible leading order terms. We begin by taking expansions of the form
\begin{equation}\label{lead}
\begin{aligned}
x_i=a_{i,0} t^{\alpha_i} +\ldots  & \qquad u_i= c_{i,0} t^{\gamma_i} +\ldots \quad \text{for } 1\leq i\leq r  \\ x_{r+1}=b_0 t^{\beta}+\ldots  & \qquad u_{r+1}= e_0  t^{\epsilon} +\ldots ,
\end{aligned}
\end{equation}
and substitute in \eqref{xu}. The possible leading terms are given respectively by
\begin{subequations} 
\begin{align}
a_{i,0} \alpha_i t^{\alpha_i -1} \, &: \, -2 a_{i,0} c_{i,0} t^{\alpha_i + \gamma_i}  \label{aa}\\
b_0 \beta t^{\beta -1} \, &: \, b_0 e_0 t^{\beta + \epsilon} \label{bb} \\
c_{i,0} \gamma_i t^{\gamma_i -1 } \, &: \, -c_{i,0} e_0 t^{\gamma_i + \epsilon}, \,\, d_i^{-1}a_{i,0} t^{\alpha_i}  \label{cc}\\
e_0 \epsilon t^{\epsilon -1 } \, &: \, -d_1 c_{1,0}^2t^{2\gamma_1} , \, -d_2 c_{2,0}^2t^{2\gamma_2} ,\ldots, \, -d_r c_{r,0}^2t^{2\gamma_r}.\label{dd}
\end{align}
\end{subequations}
When we have more than one factor the behaviour of the leading order terms is much more complicated because \eqref{dd} has various different terms which could potentially be leading, so the relationship between $\gamma_i$ and $\epsilon$ is not as straightforward as in the $r=1$ case. The following lemma is useful to identify the possible leading order terms.
\begin{lem} \label{leadorder}
The leading order terms satisfy the following properties:
\begin{enumerate}
 \item There exists at least one $i$ such that $\alpha_i\neq 0$.
 \item $\gamma_i\geq -1$ for all $i$.
 \item $\alpha_i\geq -2$ for all $i$.
 \item $\epsilon=-1$ always.
 \item $\beta=e_0$ always.
 \item If $\alpha_i \neq 0$ then $\gamma_i=-1$. 
 \item If $\alpha_i = 0$ then $\gamma_i=1$.
 \item If $\alpha_i=-2$ for some $i$, then either $\alpha_j = 0$ or $\alpha_j=-2$ for all $1\leq j \leq r$. Likewise, if $\alpha_i>-2$ and $\alpha_i\neq 0$ for some $i$, then either $\alpha_j = 0$, or $\alpha_j >-2$, $\alpha_j\neq 0$ for all $1\leq j \leq r$.
\end{enumerate}
\end{lem}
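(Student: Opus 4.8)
The plan is to exploit the fact that two of the four equations in \eqref{xu} integrate immediately as logarithmic derivatives, which pins down most of the leading exponents with essentially no computation. From $\dot x_i=-2x_iu_i$ we read $u_i=-\tfrac12(\log x_i)'$, and from $\dot x_{r+1}=x_{r+1}u_{r+1}$ we read $u_{r+1}=(\log x_{r+1})'$. Consequently, if $x_i$ has leading term $a_{i,0}t^{\alpha_i}$ with $\alpha_i\neq0$, then $\log x_i$ carries a logarithmic term and $u_i$ inherits a simple pole, $u_i\sim-\tfrac{\alpha_i}{2}t^{-1}$; this gives property (6) at once, namely $\gamma_i=-1$ and $c_{i,0}=-\alpha_i/2$, with no appeal to dominant balance. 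The same identity shows that whenever $x_{r+1}$ is genuinely singular ($\beta\neq0$) one gets $\epsilon=-1$ and $e_0=\beta$, which are (4) and (5). For the $\alpha_i=0$ factors the same relation only yields the weaker bound $\gamma_i>-1$, since $x_i$ is then regular with nonzero leading value.

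First I would dispose of the degenerate configuration to obtain (1). Assuming all $\alpha_i=0$, every $x_i$ is regular with nonzero leading value, and the logarithmic-derivative bootstrap used for $r=1$ propagates non-negativity of exponents through \eqref{xu}: each $u_i$ is regular, \eqref{dd} then keeps $\dot u_{r+1}$ regular, hence $u_{r+1}$ and $x_{r+1}$ are regular, so \eqref{lead} is non-singular, contradicting the premise of a pole expansion. Granting (1), choose $i$ with $\alpha_i\neq0$; by (6) the right-hand side of \eqref{dd} then contains the term $-d_ic_{i,0}^2t^{-2}$, which forces $\dot u_{r+1}$ to have a $t^{-2}$ part and hence $u_{r+1}$ a simple pole. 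Matching coefficients gives $\epsilon=-1$ and $e_0=\sum_{\alpha_k\neq0}d_kc_{k,0}^2>0$, so $\beta=e_0>0$ and in particular $\beta\neq0$, closing (4) and (5). With $\epsilon=-1$ secured, (3) follows from \eqref{cc}: for $\alpha_i\neq0$ both $\dot u_i$ and $-u_iu_{r+1}$ sit at order $t^{-2}$, so if $\alpha_i<-2$ the term $d_i^{-1}x_i\sim t^{\alpha_i}$ would be a strictly dominant, unmatched singularity, forcing $a_{i,0}=0$, a contradiction.

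The core of the argument is the rigidity statement (8), which I would obtain from the order-$t^{-2}$ balance of \eqref{cc} carried out factor by factor. For a factor with $\alpha_i\neq0$ substitute $c_{i,0}=-\alpha_i/2$ and compare the $t^{-2}$ coefficients: if $\alpha_i>-2$ the term $d_i^{-1}x_i$ is subleading and the balance collapses to $e_0=1$, i.e. $\beta=1$; if $\alpha_i=-2$ the term $d_i^{-1}x_i$ also lands at order $t^{-2}$ and the balance instead fixes $a_{i,0}=d_i(\beta-1)$. Now suppose both regimes occurred simultaneously: a factor with $\alpha_j>-2$, $\alpha_j\neq0$ forces $\beta=1$, whereupon a factor with $\alpha_i=-2$ would require $a_{i,0}=d_i(\beta-1)=0$, impossible for a nonzero leading coefficient. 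This dichotomy is exactly (8), and it also determines $\beta$: the all-$(-2)$ regime has $\beta=e_0=\sum_{\alpha_k=-2}d_k\geq2$ by \eqref{dd}, while the other regime has $\beta=1$, so in every case $\beta\geq1$. Finally (7): for $\alpha_i=0$ the nonzero constant $d_i^{-1}a_{i,0}$ in \eqref{cc} must be matched by the left-hand side, which rules out $\gamma_i>1$, while if $\gamma_i<1$ the dominant $t^{\gamma_i-1}$ balance forces $\gamma_i=-\beta$, excluded since $\gamma_i>-1$ and $\beta\geq1$; hence $\gamma_i=1$. Property (2) is then immediate, as (6) and (7) give $\gamma_i\in\{-1,1\}$.

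The hard part will be organizing these deductions so that they are not circular: the value of $\beta$ depends on the dichotomy (8), which in turn feeds the proof of (7), while (4)--(6) must be secured first from the logarithmic-derivative structure together with (1). The second delicate point is to guarantee that each claimed dominant balance is genuinely leading, so that no spurious cancellation promotes a lower-order term; for this I would keep careful track of the nonvanishing of the leading coefficients $a_{i,0}$, $b_0$, $c_{i,0}$, $e_0$ throughout, most critically when excluding the $\gamma_i=-\beta$ branch in (7) and the $\alpha_i<-2$ possibility in (3). I expect (8), with its attendant determination of $\beta$, to be the genuine obstacle, since it is the only step that couples all factors at once through \eqref{dd} and \eqref{cc}.
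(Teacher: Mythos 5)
Your proof is correct and follows essentially the same route as the paper's: dominant-balance matching of leading exponents and coefficients in \eqref{aa}--\eqref{dd}, with your logarithmic-derivative identities $u_i=-\tfrac12(\log x_i)'$ and $u_{r+1}=(\log x_{r+1})'$ being a clean repackaging of the balances \eqref{aa} and \eqref{bb}. The only differences are cosmetic: your ordering (deducing (2) last from (6)--(7), and pinning down $\beta\geq 1$ before proving (7)) and your use of the \eqref{cc} relation $a_{i,0}=d_i(e_0-1)\neq 0$ in place of the paper's \eqref{dd} bound $e_0>1$ for the $\alpha_i=-2$ branch of the dichotomy (8).
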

\begin{proof}
(1) Assume by contradiction that $\alpha_i=0$ for all $i$. Then \eqref{aa} implies $\gamma_i>-1$ for all $i$; and by \eqref{dd} $\epsilon>-1$ as well. \eqref{bb} gives $\beta=0$. From \eqref{cc} we see that $\gamma_i, \varepsilon\geq 0$ and thus the solutions are not singular. (2) follows directly from \eqref{aa}, otherwise the exponents can not be balanced; this together with \eqref{cc} readily imply (3). (4) comes from noting that \eqref{bb} implies that $\epsilon \geq -1$ (otherwise the exponents can not be balanced). Then, (1) shows there exist terms with leading exponent $-2$ on the right hand side of \eqref{dd}. These terms can not cancel each other out because all their coefficients are strictly negative, so $\epsilon =-1$ to balance both sides. (5) follows immediately from (4) and \eqref{bb}. (6) is directly implied by \eqref{aa}. For (7), notice that $\alpha_i=0$ implies that $\gamma_i \neq 0$, otherwise \eqref{cc} can not be balanced. Likewise, if $\gamma_i>1$ \eqref{cc} can not be balanced. On the other hand, if $\gamma_i<1$ then $\gamma_i>-1$, otherwise \eqref{aa} can not be balanced. From \eqref{cc} we have $\gamma_i=-e_0\leq -1$ a contradiction. Finally to show (8) notice that $\alpha_i=-2$ implies (using \eqref{dd}) that $e_0>1$ whereas $\alpha_i>-2$, $\alpha_i\neq0$ implies $e_0=1$ (using \eqref{cc}).
\end{proof}
The lemma above is useful because it allows us to classify the possible leading order terms in a way that is analogous to the $r=1$ case. In particular, the leading exponents $\alpha_i$ can be organized in two classes (changing the indices if necessary); one consisting of nonzero exponents $\alpha_1 \leq \ldots \leq \alpha_l$, and a second one consisting of vanishing exponents $\alpha_{l+1}= \ldots= \alpha_{r}=0$ (naturally it is possible to have no vanishing leading exponents, i.e. $l=r$). The last statement of the lemma can be rephrased as saying that the nonzero exponents must either all be equal to -2 or otherwise all be strictly greater than -2. Note that the lemma does not say anything about the sign, and in the latter case some of the exponents may actually be positive. As in the $r=1$ case, we will leave the geometric interpretation of these observations for Section \ref{conclusions}.\\

Following the lemma above, we can reduce the Painlev\'e analysis of a multiple warped product to two cases: one where $\alpha_1=-2$, and one where $\alpha_1>-2$ and $\alpha_1\neq 0$. We will study each of them separately. As above, in what follows we  assume that $\alpha_i\neq 0$ for $1\leq i \leq l$ and $\alpha_i=0$ for $l+1 \leq i \leq r$.

\subsection{Case $\alpha_1=-2$}\label{I}
We begin by identifying the rest of the leading order terms in the expansion \eqref{lead}. As stated above, in this case we have $\alpha_1= \ldots =\alpha_l=-2$ and $\alpha_{l+1}=\ldots=\alpha_r=0$.
\begin{pro}\label{coefI}
If $\alpha_1=-2$, the leading order terms of the corresponding series expansion are given by Lemma \ref{leadorder} and the following relations:
\begin{equation} 
\begin{array}{lll}
a_{i,0}=d_i(e_0-1)\quad & c_{i,0}=1 \quad & \textnormal{if } 1\leq i \leq l \\
a_{i,0}=\textnormal{Arbitrary }  & c_{i,0}=a_{i,0}\,d_i^{-1}(e_0+1)^{-1} & \textnormal{if } l+1 \leq i \leq r \\
b_0=\textnormal{Arbitrary } & e_0=\sum_{k=1}^l d_k
\end{array}
\end{equation}
\end{pro}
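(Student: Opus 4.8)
The plan is to pin down every leading coefficient and the value of $e_0$ by matching the dominant monomials in the four families \eqref{aa}--\eqref{dd}, feeding in the leading exponents already supplied by Lemma \ref{leadorder}. Since we are in the case $\alpha_1=-2$, part (8) of that lemma forces $\alpha_i=-2$ for $1\le i\le l$ and $\alpha_i=0$ for $l+1\le i\le r$; parts (6)--(7) then give $\gamma_i=-1$ in the first range and $\gamma_i=1$ in the second, while (4)--(5) fix $\epsilon=-1$ and $\beta=e_0$. In particular the balance in \eqref{bb} collapses to the identity $\beta=e_0$ (dividing by $b_0\neq 0$), which imposes no condition on $b_0$; hence $b_0$ is free.

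First I would handle the indices $1\le i\le l$. For these, both sides of \eqref{aa} sit at order $t^{-3}$, so matching coefficients gives $-2a_{i,0}=-2a_{i,0}c_{i,0}$, and dividing by $a_{i,0}\neq 0$ yields $c_{i,0}=1$. Next, in \eqref{cc} the left-hand monomial and \emph{both} right-hand monomials $-c_{i,0}e_0\,t^{\gamma_i+\epsilon}$ and $d_i^{-1}a_{i,0}\,t^{\alpha_i}$ all land at order $t^{-2}$; equating coefficients and substituting $c_{i,0}=1$ gives $-1=-e_0+d_i^{-1}a_{i,0}$, whence $a_{i,0}=d_i(e_0-1)$.

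For $l+1\le i\le r$ the exponent $\alpha_i=0$ makes the left side of \eqref{aa} vanish identically, so that equation constrains nothing at leading order and $a_{i,0}$ stays arbitrary. The governing balance is again \eqref{cc}: with $\gamma_i=1$, $\epsilon=-1$, $\alpha_i=0$ all three monomials occur at order $t^0$, and matching gives $c_{i,0}(1+e_0)=d_i^{-1}a_{i,0}$, i.e.\ $c_{i,0}=a_{i,0}\,d_i^{-1}(e_0+1)^{-1}$. It remains to extract $e_0$ from \eqref{dd}, whose left side is at order $t^{-2}$: on the right, the term $-d_k c_{k,0}^2\,t^{2\gamma_k}$ has exponent $-2$ when $k\le l$ (where $\gamma_k=-1$, $c_{k,0}=1$) but exponent $+2$ when $k>l$ (where $\gamma_k=1$), so only the indices $k\le l$ contribute at the dominant order; balancing coefficients then gives $-e_0=-\sum_{k=1}^l d_k$, that is $e_0=\sum_{k=1}^l d_k$.

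The only step demanding genuine care is deciding, in each equation, which monomials are truly dominant. In \eqref{cc} it is the coincidence of the three exponents (at $t^{-2}$ for $i\le l$, at $t^0$ for $i>l$) that couples $a_{i,0}$, $c_{i,0}$, and $e_0$ into a single linear relation, and in \eqref{dd} it is the sign of $\gamma_k$ alone that decides whether a factor enters the leading balance. Once this bookkeeping is settled, each stated relation reduces to a one-line coefficient comparison, so I expect no further obstacle.
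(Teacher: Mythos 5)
Your proof is correct and follows essentially the same route as the paper's: matching leading coefficients in \eqref{aa}--\eqref{dd} using the exponents supplied by Lemma \ref{leadorder}, the only (immaterial) difference being that you extract $a_{i,0}=d_i(e_0-1)$ before pinning down $e_0$ from \eqref{dd}, while the paper does these two steps in the opposite order. Incidentally, your sign for $e_0$ is the correct one --- consistent with the proposition's statement and with $e_0>1$ from Lemma \ref{leadorder}(8) --- whereas the paper's proof text contains a sign slip, writing $e_0=-(d_1+\cdots+d_l)$.
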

\begin{proof}
Note that for $1 \leq i \leq l$, \eqref{aa} yields $c_{i,0}=1$. From this and \eqref{dd} we readily deduce that $e_0=-(d_1+\ldots +d_l)$, and $a_{i,0}$ is then determined by \eqref{cc}. When $l+1 \leq i \leq r$ \eqref{aa} does not give any information on $c_{i,0}$, so instead of a precise value, \eqref{cc} provides the relation $d_i(e_0+1)c_{i,0}=a_{i,0}$. We can take $a_{i,0}$ to be arbitrary; this determines $c_{i,0}$. Since $b_0$ only appears in \eqref{bb} it is a free parameter. For the sake of completeness, recall that the lemma above shows that the exponent $\beta=e_0$.
\end{proof}
We now have a full set of initial conditions. Note that already at this stage we have $r-l+1$ free parameters in the expansion, namely $b_0$ and $a_{i,0}$ for $l+1 \leq i \leq r$. To compute the full recursion relation associated to these leading order terms we take expansions of the form
\begin{align*}
&x_{j}=\sum_{i=0}^\infty a_{j,i} t^{-2+i},    && u_j= \sum_{i=0}^\infty c_{j,i} t^{-1+i} \qquad \text{for } 1\leq j\leq l  \\ 
&x_{j}=\sum_{i=0}^\infty a_{j,i} t^{i}  ,  && u_j= \sum_{i=0}^\infty c_{j,i} t^{1+i} \quad \text{for } l+1\leq j\leq r  \\
&x_{r+1}=\sum_{i=0}^\infty b_i t^{\beta+i} ,   &&u_{r+1}= \sum_{i=0}^\infty e_i  t^{-1+i} 
\end{align*}
and substitute in \eqref{xu}. To make the resulting expression more manageable we introduce the following notation.
\begin{notation}
We denote $\aaa_i=(a_{1,i}, \ldots, a_{r,i})$. Thus, the entries of this vector are the coefficients of $t^{\alpha_j+i}$ in the expansion of each $x_j$. Since we often have to consider the vanishing and non-vanishing exponents separately, we further decompose this vector as $\aaa_{i}=(\aaa'_i,\aaa''_i)$, where $\aaa'_i=(a_{1,i},\ldots, a_{l,i})$ and $\aaa''_{i}=(a_{l+1,i},\ldots, a_{r,i})$. We make an analogous convention for $\cc_i$ and decompose it as $\cc_i=(\cc_i',\cc_i'')$. We also denote $d=(d',d'')$, where $d'=(d_1,\ldots, d_l)$ and $d''=(d_{l+1},\ldots, d_r)$. Whenever we have an operation between two vectors we will think of it as being performed coordinatewise, so for example $\aaa'_i \cc'_j=(a_{1,i}\, c_{1,j},\ldots , a_{l,i}\, c_{l,j})$. Exponentiation is also meant to be understood componentwise, so for example $d'^{-1}=(d_1^{-1},\ldots, d_l^{-1})$. If there is a product between a vector and a scalar we use the usual multiplication of a vector by a scalar factor, so for example a term of the form $\cc_{i}' \, e_{j}$ is just a  rescaling of the vector $\cc_i'$. Finally, the $n\times n$ identity matrix is denoted by $I_n$. To make visualization easier, one may think of the $r=2$, $l=1$ case so $\aaa_i'=a_{1,i}$ and $\aaa_i''=a_{2,i}$ in what follows.
\end{notation}
With this notation the recursion relation can be written as

\begin{equation}\label{recr3}
\begin{aligned}
 \left(  \begin{array}{ccc|ccc}
iI_{l} & & & 2\diag( \aaa_0') & & \\
 & iI_{r-l} & & & 0 & \\
 & & i & & & -b_0 \\ \hline
-\diag(d'^{-1}) & &  &(i+(e_0-1))I_l & & \cc_0' \\
 & -\diag(d''^{-1}) & & & (i+(e_0+1))I_{r-l} & \cc_{0}'' \\
 & & 0 & 2 d'  & & i-1 \\
\end{array} \right)  &
\left(  \begin{array}{c}
\aaa_{i}' \\
\aaa_{i}''\\
b_i\\ \hline
\cc_i' \\
\cc_{i}''\\
e_i
\end{array} \right)=\\
 \left( \begin{array}{c}
-2\sum_{k=1}^{i-1} \aaa_{i-k}' \, \cc_{k}' \\
-2\sum_{k=0}^{i-1} \aaa_{i-2-k}'' \, \cc_{k}''\\
\sum_{k=1}^{i-1} b_{i-k} \, e_k\\ \hline
-\sum_{k=1}^{i-1} \cc_{i-k}' \, e_{k} \\
-\sum_{k=1}^{i-1} \cc_{i-k}'' \, e_{k} \\
-\langle d' ,\sum_{k=1}^{i-1}\cc_{i-k}' \, \cc_k'  \rangle -\langle d'' ,\sum_{k=0}^{i-4}\cc_{i-4-k}'' \, \cc_k''  \rangle \end{array} \right) &
\end{aligned}
\end{equation}

Before we compute the resonances we make a few remarks. First, the last entry on the right hand side should be understood as being the standard (real) inner product of the corresponding vectors. Also, notice that $\cc_0'=(1,\ldots, 1)$ due to Proposition \ref{coefI}. As before, we denote the $(2r+2)\times (2r+2)$ matrix on the left hand side by $X(i)$ and the column vector by $\vv_i$. We proceed to compute the resonances.

\begin{pro}
The determinant of $X(i)$ factorizes as
$$ \det X(i)=i^{r-l+1}(i+1)(i-2)(i+(e_0+1))^{r-l}(i+(e_0-1)) p(i)^{l-1}, $$
where $p(i)= i^2+i(e_0-1)+2(e_0-1)$. 
\end{pro}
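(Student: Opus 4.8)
The plan is to compute $\det X(i)$ by a chain of block eliminations that peel off, one at a time, the factors appearing in the claimed formula. The whole computation works because several variable-blocks couple to the rest of the system only through a single equation or a single variable, so the relevant Schur-complement corrections vanish identically. First I would expand the determinant along the block-column of the variable $b_i$ (the third block-column). Inspecting \eqref{recr3}, that column contains the single nonzero entry $i$ on the diagonal: the coupling $-b_0$ sits in the $b$-\emph{equation} (third row) rather than in this column, and the bottom row carries a $0$ there. Cofactor expansion thus yields the factor $i$ and deletes the $b$-row and $b$-column, leaving a $(2r+1)\times(2r+1)$ matrix $Y(i)$ in the variables $(\aaa',\aaa'',\cc',\cc'',e)$.

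Next I would remove the two double-primed blocks by Schur complements. The $\aaa''$-equation (second block-row) has entries only in the $\aaa''$-column, with diagonal block $i\,I_{r-l}$; since the pivot row is otherwise zero, the correction vanishes and this block contributes a clean factor $i^{r-l}$, deleting the corresponding row and column. In the remaining matrix the $\cc''$-column has $(i+e_0+1)I_{r-l}$ as its only nonzero entry (the pivot column is otherwise zero), so the same reasoning contributes $(i+e_0+1)^{r-l}$. After these reductions I am left with the $(2l+1)\times(2l+1)$ matrix
\[
A=\begin{pmatrix}
i\,I_l & 2(e_0-1)\Delta & 0\\
-\Delta^{-1} & (i+e_0-1)I_l & \mathbf{1}\\
0 & 2\,\mathbf{1}^{T}\Delta & i-1
\end{pmatrix},
\]
where $\Delta=\diag(d_1,\dots,d_l)$ and $\mathbf{1}=(1,\dots,1)^{T}=\cc_0'$, using $\aaa_0'=(e_0-1)\Delta\mathbf{1}$ and $\cc_0'=\mathbf{1}$ from Proposition \ref{coefI}. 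It then remains to prove $\det A=(i+1)(i-2)(i+e_0-1)\,p(i)^{l-1}$.

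For this I would view $A$ as a bordered matrix whose border is the $e$-row and $e$-column. Its leading $2l\times2l$ block $B$ is assembled from commuting diagonal blocks, so after reindexing by the factor label it splits into the $2\times2$ blocks $B_j=\left(\begin{smallmatrix} i & 2(e_0-1)d_j\\ -d_j^{-1} & i+e_0-1\end{smallmatrix}\right)$, each with $\det B_j=i(i+e_0-1)+2(e_0-1)=p(i)$, whence $\det B=p(i)^{l}$. Applying the bordered-determinant identity $\det A=\det B\,\big((i-1)-v^{T}B^{-1}u\big)$ with $u=(0,\mathbf{1})^{T}$ and $v=(0,2\Delta\mathbf{1})^{T}$, only the $(\cc',\cc')$-corner of $B^{-1}$ contributes; from $B_j^{-1}$ that corner equals $\tfrac{i}{p(i)}I_l$, so $v^{T}B^{-1}u=\tfrac{2i}{p(i)}\,\mathbf{1}^{T}\Delta\mathbf{1}=\tfrac{2i\,e_0}{p(i)}$, where I use the relation $e_0=\sum_{k=1}^{l}d_k$ from Proposition \ref{coefI}. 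This gives $\det A=p(i)^{l-1}\big[(i-1)p(i)-2i\,e_0\big]$, and expanding the bracket identifies it with $(i+1)(i-2)(i+e_0-1)$. Multiplying the accumulated factors $i\cdot i^{r-l}\cdot(i+e_0+1)^{r-l}$ by $\det A$ then reproduces the stated factorization; as a sanity check, setting $r=l=1$ (so $e_0=d_1$) recovers $i(i+d_1-1)(i+1)(i-2)$, matching the $r=1$ computation.

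I expect the main obstacle to be organizing the elimination order so that every Schur-complement correction genuinely vanishes, that is, recognizing at each stage precisely which block-row or block-column is otherwise empty (the $b$-column, the $\aaa''$-equation, and the $\cc''$-column). Once this bookkeeping is in place, the reduction to $A$, the $2\times2$ block computation producing $p(i)$, and the final cubic identity $(i-1)p(i)-2i\,e_0=(i+1)(i-2)(i+e_0-1)$ are all routine algebra.
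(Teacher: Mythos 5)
Your proof is correct, and it takes a genuinely different route from the paper's. Every step checks out against \eqref{recr3}: the $b$-column, the $\aaa''$-rows, and the $\cc''$-columns do decouple exactly as you claim, the blocks $B_j$ have determinant $i(i+e_0-1)+2(e_0-1)=p(i)$, the relevant corner of $B_j^{-1}$ is $i/p(i)$, and the closing identity $(i-1)p(i)-2ie_0=(i+1)(i-2)(i+e_0-1)$ holds. The paper instead collapses the whole $(2r+2)\times(2r+2)$ determinant in one stroke: it writes $X(i)=\left(\begin{smallmatrix} A & B \\ C & D\end{smallmatrix}\right)$ with $(r+1)\times(r+1)$ blocks, invokes the commuting-block identity $\det X(i)=\det(AD-BC)$, block-triangularizes $AD-BC$ by row and column operations to pull out $\left(i(i+e_0+1)\right)^{r-l}$, and evaluates the residual $(l+1)\times(l+1)$ matrix (diagonal $p(i)$'s bordered by last row $2id_k$ and last column $i$) by induction along the last column; that induction is your bracket $(i-1)p(i)-2ie_0$ in disguise, since the paper's bordered matrix carries an extra overall factor of $i$. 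Your version is a bit longer but buys two things. First, it is entirely self-contained: only cofactor expansion and the standard bordered-determinant formula are used (with the routine caveat, worth stating, that the Schur step needs $p(i)\neq 0$, which suffices because both sides are polynomials in $i$). Second, it sidesteps a genuine subtlety in the paper's argument: the hypothesis the paper cites for $\det X=\det(AD-BC)$, namely $[C,D]=0$, is in fact false for these blocks, because $D$ contains the column $\cc_0'$, which does not commute with the non-constant diagonal of $C$. The identity is nevertheless valid there, but for a different reason — $A=iI_{r+1}$ is scalar, so $\det X=\det(AD-CB)$, and $B$, $C$ are both diagonal, so $CB=BC$. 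Your elimination scheme never needs any commutation hypothesis at all, which makes it the cleaner of the two arguments on this point.
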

\begin{proof}
If we divide $X(i)$ into four $(r+1) \times (r+1)$ blocks $$X(i)=\left( \begin{array}{cc} A & B \\ C & D \end{array} \right)$$
it is possible to use that $\det X(i)=\det (AD-BC)$ since $[ C,D ]=0$. One can use row and column operations to further rearrange $AD-BC$ into a block upper triangular matrix and factorize $( i (i+(e_0+1))^{r-l}$ from the determinant. The remaining block is given by the $(l+1) \times (l+1)$ matrix
$$ \left(  \begin{array}{ccc|c}
p(i) & & & i \\
 & \ddots & & \vdots \\
 & & p(i) & i\\ \hline
2id_1 & \ldots & 2i d_{l} & i(i-1)
\end{array} \right). $$
The proposition follows by induction if we compute the latter determinant along the last column.
\end{proof}
Notice that if $2\leq e_0 \leq 8$, $p(i)$ has no real roots, otherwise its roots are negative, and therefore meaningless from the point of view of Painlev\'e analysis. Likewise, the roots $i=-e_0-1$ and $i=-e_0+1$ (of multiplicities $r-l$ and 1, respectively) do not have any free parameters associated to them. Therefore, the only steps at which free parameters enter the expansion are $i=-1, 0$, and $2$. The resonance at $i=-1$ corresponds to the arbitrariness of $t_0$, whereas at $i=0$ we have the free parameters $b_0$ and $\aaa_0''$. This matches the multiplicities of these roots, which are $1$ and $r-l+1$, respectively. In the next proposition we show that compatibility conditions are satisfied at the top resonance $i=2$.
\begin{pro}
Compatibility conditions are satisfied at the top resonance $i=2$ for any choice of free parameters $a_{i,0}$, $l+1\leq i \leq r$. Furthermore, the free parameter entering the expansion at this step can be chosen so that the formal solution lies on the level set $H=0$.
\end{pro}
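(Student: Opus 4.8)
The plan is to mirror the proofs of Proposition~\ref{r=1} and its $\alpha=\mp 2/\sqrt{d_1}$ analogue: compute the first two steps of the recursion \eqref{recr3}, verify that the right hand side $\vv_2$ lies in the image of $X(2)$, and then use the one-dimensional kernel (whose dimension matches the multiplicity of the root $i=2$) to move the formal solution onto $\{H=0\}$.

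First I would show $\vv_1=0$. At $i=1$ every sum on the right hand side of \eqref{recr3} is either empty or, because of the index shifts in the $\aaa_{i-2-k}''$ and $\cc_{i-4-k}''$ terms together with the convention that coefficients of negative index vanish, evaluates to zero. Since $e_0=\sum_{k=1}^l d_k\ge 2$, none of the factors of $\det X(1)$ vanish (in particular $p(1)=3e_0-2>0$ and $i+e_0-1=e_0\neq 0$), so $X(1)$ is invertible and all the $i=1$ coefficients are forced to vanish. Substituting $\vv_1=0$ back into \eqref{recr3} at $i=2$ annihilates every block of $\vv_2$ except the $\aaa''$-block, which by Proposition~\ref{coefI} equals $-2\aaa_0''\cc_0''=-2(\aaa_0'')^2 d''^{-1}(e_0+1)^{-1}$ componentwise.

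The heart of the argument is to solve \eqref{recr3} at $i=2$ by back-substitution through the blocks. The $\aaa''$-row fixes $\aaa_2''=-\aaa_0''\cc_0''$ outright, and the $\cc''$-row then determines $\cc_2''$ (using $3+e_0\neq 0$). In the primed sector I would eliminate $\aaa_2'=-\diag(\aaa_0')\cc_2'$ from the $\cc'$-row; invoking $d'^{-1}\aaa_0'=(e_0-1)\mathbf{1}$ and $\cc_0'=\mathbf{1}$ from Proposition~\ref{coefI}, that row collapses to $2e_0\,\cc_2'=-e_2\mathbf{1}$, whence $\cc_2'=-\tfrac{e_2}{2e_0}\mathbf{1}$. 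Feeding this into the final $e$-row and using $\langle d',\mathbf{1}\rangle=e_0$ gives $2\langle d',\cc_2'\rangle+e_2=-e_2+e_2=0$, so the resonance row is satisfied \emph{identically}. This simultaneously establishes compatibility for every choice of the parameters $a_{i,0}$, $l+1\le i\le r$, and shows that the single undetermined quantity is $e_2$, i.e. exactly one free parameter enters at $i=2$.

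For the last assertion I would use that $\Phi:=\sum_{k=1}^r d_k u_k^2-u_{r+1}^2+\sum_{k=1}^r x_k+1$ is constant on solutions of \eqref{xu}, so its Laurent series for a formal solution reduces to the $t^0$ coefficient (the $t^{-2}$ term vanishes by Proposition~\ref{coefI} and the $t^{-1}$ term vanishes because $\vv_1=0$). Collecting the $t^0$ contributions, namely $-e_2$ from the primed $d_k u_k^2$, $-2e_0 e_2$ from $-u_{r+1}^2$, and $\tfrac{e_0-1}{2}\,e_2$ from the primed $x_k$, together with the $e_2$-independent constant $1+\sum_{k=l+1}^r a_{k,0}$, produces a linear equation in $e_2$ with coefficient $-\tfrac32(1+e_0)\neq 0$, so there is a unique $e_2$ placing the solution on $H=0$. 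The main obstacle is precisely the vanishing of the resonance row: it is \emph{not} automatic but hinges on the leading coefficients of Proposition~\ref{coefI} conspiring through the cancellation $(e_0-1)+(1+e_0)=2e_0$ and then $2\langle d',\cc_2'\rangle=-e_2$. Keeping the primed/unprimed block structure of \eqref{recr3} straight throughout this elimination is where the care is needed.
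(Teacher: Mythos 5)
Your proof is correct and follows essentially the same route as the paper: show that the right-hand side forces $\vv_1=0$ (via invertibility of $X(1)$), solve the $i=2$ system with one free parameter ($e_2$, which equals the paper's $2\lambda$), and choose it to annihilate the $t^0$ coefficient of $\sum_k d_k u_k^2-u_{r+1}^2+\sum_k x_k+1$; indeed your value $e_2=\tfrac{2(1+\sum_{k=l+1}^r a_{k,0})}{3(e_0+1)}$ matches the paper's $\lambda=\tfrac{1+\sum_{k=l+1}^r a_{k,0}}{3(e_0+1)}$. The only cosmetic differences are that you verify compatibility by explicit block elimination, showing the resonance row reduces to $0=0$ (where the paper asserts membership in the image of $X(2)$ and writes the general solution), and that you invoke constancy of the Hamiltonian constraint along solutions to reduce the $H=0$ check to the $t^0$ coefficient, where the paper checks the $t^{-2}$ and $t^{-1}$ coefficients directly.
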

\begin{proof}
We compute the first two steps of the recursion. At step $i=1$ the right hand side of \eqref{recr3} vanishes, so $\vv_1=0$. At $i=2$ the matrix $X(2)$ has rank $2r+1$ and the right hand side is given by
$$ \left(0,-2 \, \aaa_0'' \, \cc_0'', 0\,\vert \, 0, 0,0 \right).$$
It is easy to see that the vector or the right is in the image of $X(2)$, so compatibility conditions are satisfied at the top resonance. The general solution is given by
\begin{align*}
&a_{i,2}=\lambda \frac{d_i (e_0-1)}{e_0} && c_{i,2}=- \lambda \frac{1}{e_0} &&& \textnormal{for } 1\leq i \leq l \\[2mm]
&a_{i,2}= -a_{i,0} \, c_{i,0} && c_{i,2}= -\frac{a_{i,0} \, c_{i,0}}{d_i (e_0+3)}- \lambda \frac{c_{i,0}}{e_0+3} &&&  \textnormal{for } l+1\leq i \leq r\\
& b_2= \lambda b_0 && e_2=2 \lambda,
\end{align*}
where $\lambda \in \mathbb{R}$ is a free parameter. The condition $H=0$ in this case is equivalent to 
$$ \sum_{k=1}^{r}d_k u_k^2-u_{r+1}^2+\sum_{k=1}^{r}x_k+1=0.$$
The coefficients for the various powers of $t$ in this expression are given by\\

$t^{-2}$ : $\sum_{k=1}^l d_k c_{k,0}^2- e_0^2 + \sum_{k=1}^l a_{k,0}=0$\\

$t^{-1}$ : $\sum_{k=1}^l 2d_k c_{k,0}c_{k,1}- 2e_0 e_1 + \sum_{k=1}^l a_{k,1}=0$\\

$t^0$ : $\sum_{k=1}^l d_k (2c_{k,0}c_{k,2}+c_{k,1}^2) -2(e_0e_2+e_1^2) +\sum_{k=1}^l a_{k,2}+ \sum_{k=l+1}^r a_{k,0}+1.$\\

Substituting with the values just obtained shows that if we take $$\lambda=\frac{1+\sum_{k=l+1}^r a_{k,0}}{3(e_0+1)}$$
the expression for $t^0$ vanishes. 
\end{proof}

Using the proposition above we have obtained a $r-l+3$-parameter family of formal solutions of \eqref{xu} regardless of the dimensions of the factors. Since the general solution of this system has $2r+2$ free parameters, the expansions corresponding to the $\alpha_1=-2$ case do not pass the full Painlev\'e test. In the context of Ricci solitons, the restriction to $H=0$ looses the free parameter at the top resonance. Accounting for the arbitrariness of $t_0$ and $b_0$ we obtain an $r-l$-parameter family of formal solutions corresponding to steady Ricci solitons.

\subsection{Case $\alpha_1>-2$}\label{II}
We now analyze the case when the first $l$ exponents $\alpha_1 \leq \ldots \leq \alpha_l$ are nonzero and strictly greater than -2 (recall that some of them may be positive). The remaining exponents $\alpha_{l+1}, \ldots, \alpha_r$ vanish as before.  Proposition \ref{leadorder} determines the value of each $\gamma_i$, however this time it does not yield an explicit value for the nonzero $\alpha_i$'s. The following proposition identifies the possible leading order terms.
\begin{pro}\label{coefII} 
The leading order terms corresponding to a series expansion with $\alpha_1>-2$ are obtained by setting
\begin{equation} 
\begin{array}{lll}
a_{i,0}=\textnormal{Arbitrary}\quad & c_{i,0}=-\alpha_i/2 \quad & \textnormal{if } 1\leq i \leq l \\
a_{i,0}=\textnormal{Arbitrary}  & c_{i,0}=a_{i,0}/2d_i & \textnormal{if } l+1 \leq i \leq r \\
b_0=\textnormal{Arbitrary } & e_0=1,
\end{array}
\end{equation}
where the leading exponents $\alpha_i$, $1\leq i \leq l$ can be taken to be any rational point (with non-zero entries) on the ellipsoid $d_1\alpha_1^2 + \ldots + d_l\alpha_l^2=4$ in $\mathbb{R}^{l+1}$.
\end{pro}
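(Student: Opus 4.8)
The plan is to determine every leading coefficient by matching the four families of monomials \eqref{aa}--\eqref{dd}, using the exponent data already supplied by Lemma \ref{leadorder}, and then to isolate the single scalar constraint binding the nonzero exponents $\alpha_1,\dots,\alpha_l$. First I would record what Lemma \ref{leadorder} gives in the present case: $\epsilon=-1$ and $\beta=e_0$ hold unconditionally; since $\alpha_i\neq 0$ for $1\leq i\leq l$ we have $\gamma_i=-1$ there, while $\alpha_i=0$ for $l+1\leq i\leq r$ forces $\gamma_i=1$; and statement (8) of the lemma yields $e_0=1$, hence $\beta=1$.

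Next I would pin down the coefficients $c_{i,0}$. For $1\leq i\leq l$ both monomials in \eqref{aa} have order $t^{\alpha_i-1}$ once $\gamma_i=-1$, so balancing them gives $\alpha_i=-2c_{i,0}$, that is $c_{i,0}=-\alpha_i/2$; note that $a_{i,0}$ cancels from both sides and is therefore unconstrained. For $l+1\leq i\leq r$, where $\alpha_i=0$ and $\gamma_i=1$, all three monomials in \eqref{cc} meet at order $t^0$, giving $c_{i,0}=-c_{i,0}+d_i^{-1}a_{i,0}$, hence $c_{i,0}=a_{i,0}/(2d_i)$, so each such $a_{i,0}$ is free and merely fixes its $c_{i,0}$. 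Finally $b_0$ occurs only in \eqref{bb}, which fixes $\beta$ and leaves $b_0$ free.

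The crux is the last equation \eqref{dd}, arising from $\dot u_{r+1}=-\sum_k d_k u_k^2$. Its left-hand side has order $t^{\epsilon-1}=t^{-2}$. On the right, the terms with $1\leq k\leq l$ contribute at order $t^{2\gamma_k}=t^{-2}$, whereas those with $l+1\leq k\leq r$ contribute at order $t^{2}$ and are subleading. Because every coefficient $-d_k c_{k,0}^2$ is strictly negative, these leading terms cannot cancel one another, so the whole leading sum must balance the left-hand side, yielding $\sum_{k=1}^l d_k c_{k,0}^2=1$. Substituting $c_{k,0}=-\alpha_k/2$ converts this into $\sum_{k=1}^l d_k\alpha_k^2=4$, the stated ellipsoid.

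I expect this last balance to be the main point of the argument, and the place where this case genuinely departs from the $\alpha_1=-2$ setting of Proposition \ref{coefI}: equation \eqref{dd} now imposes only a \emph{single} scalar constraint on the $l$ exponents instead of determining them, which is exactly why an entire hypersurface of admissible leading orders appears rather than an isolated choice. To close, I would observe that expansions in a rational power of $t$---required for the weak Painlev\'e property---force each $\alpha_i$ to be rational; together with the standing assumption $\alpha_i\neq 0$ this identifies the admissible leading orders with the rational points having nonzero entries on the ellipsoid $d_1\alpha_1^2+\dots+d_l\alpha_l^2=4$.
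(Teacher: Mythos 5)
Your proof is correct and follows essentially the same route as the paper: balance the leading monomials in \eqref{aa}--\eqref{dd} using the exponents from Lemma \ref{leadorder} to get $c_{i,0}=-\alpha_i/2$, $e_0=1$, $c_{i,0}=a_{i,0}/2d_i$, and then read the ellipsoid constraint $\sum_{k=1}^l d_k\alpha_k^2=4$ off \eqref{dd}. One small citation point: the \emph{statement} of Lemma \ref{leadorder}(8) concerns only the exponents $\alpha_j$, so $e_0=1$ should be attributed to the balance in \eqref{cc} (as the paper does, and as the lemma's proof of (8) observes), not to the statement of (8) itself.
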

\begin{proof}
If $1 \leq i \leq l$, \eqref{aa} yields $c_{i,0}=-\alpha_i/2$. Since $\alpha_i>-2$, this exponent is no longer leading in \eqref{cc}, from which we deduce that $e_0=1$. From this we see that for $l+1 \leq i \leq r$, \eqref{cc} gives $2c_{i,0}d_i=a_{i,0}$. The coefficients $a_{i,0}$ are arbitrary since they only appear in \eqref{cc}. Finally, \eqref{dd} implies that $1=d_1c_{1,0}^2+\ldots+d_lc_{l,0}^2$.
\end{proof}
We now proceed to compute the full recursion relation. In what follows it will be useful to have a common leading exponent for all the singular $x_j$'s, so we will start the corresponding series at level $t^{-2}$ (as opposed to $t^{\alpha_j}$). We call the greatest common divisor $Q:=\textnormal{gcd}\{\alpha_1 ,\ldots, \alpha_l, 2 \}$ and we write $\alpha_j=-2+s_jQ$. Thus, we will be looking for solutions of the form
\begin{equation} \label{series}
\begin{aligned}
&x_{j}=\sum_{i=0}^\infty a_{j,i-s_j}\, t^{-2+iQ},    && u_j= \sum_{i=0}^\infty c_{j,i} \, t^{-1+iQ} \qquad \text{for } 1\leq j\leq l  \\ 
&x_{j}=\sum_{i=0}^\infty a_{j,i}\, t^{iQ}  ,  && u_j= \sum_{i=0}^\infty c_{j,i} \, t^{1+iQ} \quad \text{for } l+1\leq j\leq r  \\
&x_{r+1}=\sum_{i=0}^\infty b_i \, t^{1+iQ} ,   &&u_{r+1}= \sum_{i=0}^\infty e_i  \, t^{-1+iQ}, 
\end{aligned}
\end{equation}
where the convention that coefficients $a_{j,k}=0$ when $k<0$ still holds. We again use the notation $\iota=iQ$. Formal series solutions of \eqref{xu} have to satisfy the recursion relation
\begin{equation} \label{recr4}
\begin{aligned}
 \left(  \begin{array}{ccc|ccc}
\iota I_{l} & & & 2\diag( \aaa_0') & & \\
 & \iota I_{r-l} & & & 0 & \\
 & & \iota & & & -b_0 \\ \hline
0 & & &\iota I_l & & \cc_{0}' \\
 & -\diag(d''^{-1}) & & & (\iota +2)I_{r-l} & \cc_{0}'' \\
 & & 0 & 2 d' \cc_0' & & \iota-1 \\
\end{array} \right) &
\left( \begin{array}{c}
\aaa_{i}' \\
\aaa_{i}''\\
b_i\\ \hline
\cc_{i}' \\
\cc_{i}''\\
e_i
\end{array} \right)=\\
\left( \begin{array}{c}
-2\sum_{k=1}^{i-1} \aaa_{i-k}'\, \cc_{k}' \\[1mm]
-2\sum_{k=0}^{i-2Q^{-1}} \aaa_{i-2Q^{-1}-k}''\, \cc_{k}''\\[1mm]
\sum_{k=1}^{i-1} b_{i-k}e_k\\ \hline
-\sum_{k=1}^{i-1} \cc_{i-k}'e_{k} +(a_{1,i-s_1}/d_1,\ldots,a_{l,i-s_l}/d_l) \\[1mm]
-\sum_{k=1}^{i-1} \cc_{i-k}''e_{k}\\[1mm]
-\langle d' ,\sum_{k=1}^{i-1}\cc_{i-k}' \, \cc_k'  \rangle -\langle d'' ,\sum_{k=0}^{i-4Q^{-1}}\cc_{i-4Q^{-1}-k}'' \, \cc_k''  \rangle
\end{array} \right) & ,
\end{aligned}
\end{equation}

As in the previous section, we call the matrix on the left $X(\iota)$. The following proposition allows us to compute the resonances. 
\begin{pro}
The determinant of $X(\iota)$ factorizes as
$$ \det X(\iota) = \iota^{r+l}(\iota+1)(\iota+2)^{r-l}(\iota-2). $$
\end{pro}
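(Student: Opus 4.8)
The plan is to compute $\det X(\iota)$ for the matrix appearing on the left-hand side of the recursion relation \eqref{recr4}, mimicking the block-elimination strategy used in the $\alpha_1=-2$ case. First I would partition $X(\iota)$ into four $(r+1)\times(r+1)$ blocks
\[
X(\iota)=\left( \begin{array}{cc} A & B \\ C & D \end{array} \right),
\]
where $A=\diag(\iota I_l,\iota I_{r-l},\iota)$ is the upper-left block and $D$ contains the entries $\iota I_l$, $(\iota+2)I_{r-l}$, and $\iota-1$ on its diagonal together with the off-diagonal couplings $\cc_0'$, $\cc_0''$, and $2d'\cc_0'$. As in the previous proposition, the lower-left block $C$ (with the $-\diag(d''^{-1})$ entry) and the block $D$ satisfy $[C,D]=0$, since $C$ is block-diagonal and commutes with the corresponding diagonal blocks of $D$. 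This lets me use the identity $\det X(\iota)=\det(AD-BC)$, reducing the problem to the determinant of a single $(r+1)\times(r+1)$ matrix.

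Next I would assemble $AD-BC$ explicitly. The key simplification specific to this case is that the top-left block of $C$ is zero (in \eqref{recr4} the $(\cc_i',\aaa_i')$ coupling in that position vanishes, in contrast to the $-\diag(d'^{-1})$ term present in \eqref{recr3}). Consequently the product $BC$ only involves the $b_0$ and $\cc_0$ entries of $B$ together with the $-\diag(d''^{-1})$ block of $C$. I expect $AD-BC$ to decouple after row and column operations into: a factor $\iota$ from each of the first $l$ rows (coming from the $\iota I_l$ block of $D$ multiplied by the $\iota I_l$ of $A$, with the quadratic cross-term contributing an extra $\iota$), a factor $(\iota+2)^{r-l}$ from the vanishing-exponent block, and a residual small matrix coupling the $b$-row and the $e$-row. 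Tracking the powers of $\iota$ carefully, the $l$ singular directions should each yield $\iota^2$ (since $c_{i,0}=-\alpha_i/2\neq0$ feeds back through $2\diag(\aaa_0')$), which accounts for the $\iota^{r+l}$ prefactor once combined with the $\iota$ from $x_{r+1}$ and the $\iota^{r-l}$ from the vanishing block.

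The residual factor should be a $2\times 2$ or small block determinant in $\iota$ whose roots are precisely $-1$ and $2$, arising from the coupling among $b_0$, $e_i$, and the $\iota-1$ diagonal entry; expanding it and using the constraint $d_1c_{1,0}^2+\ldots+d_lc_{l,0}^2=1$ from Proposition \ref{coefII} (equivalently $\langle d',\cc_0'\cc_0'\rangle=1$) should collapse the algebra so that the quadratic $\iota^2-\iota-2=(\iota+1)(\iota-2)$ emerges. I would verify the total degree matches: $\iota^{r+l}\cdot(\iota+1)\cdot(\iota+2)^{r-l}\cdot(\iota-2)$ has degree $(r+l)+1+(r-l)+1=2r+2$, exactly the size of $X(\iota)$, which serves as a useful consistency check.

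The main obstacle I anticipate is correctly bookkeeping the multiplicity of the root $\iota=0$, since here it has multiplicity $r+l$ rather than the $r-l+1$ of the previous case; the discrepancy comes from the fact that each of the $l$ singular directions now contributes a double zero at $\iota=0$ (because the feedback term $c_{i,0}=-\alpha_i/2$ makes the relevant $2\times2$ sub-block degenerate to order two there) instead of feeding into a nontrivial quadratic $p(\iota)$. Keeping the row/column operations from accidentally introducing or cancelling spurious factors of $\iota$ during the reduction to upper-triangular form is the delicate step; I would carry out the induction on $l$ explicitly to confirm the exponent, expanding along the last column of the reduced $(l+1)\times(l+1)$ block just as in the $\alpha_1=-2$ proof and checking the base case $l=1$ by hand.
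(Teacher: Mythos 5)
Your route is essentially the paper's (block split of $X(\iota)$, factor out $(\iota+2)^{r-l}$, reduce to an arrow matrix, invoke the ellipsoid relation, induct on $l$), but the one step you actually justify rests on a false claim: here $[C,D]\neq 0$. The only nonzero block of $C$ sits in block position $(2,2)$ (the coupling $-\diag(d''^{-1})$ of the $\cc''$-rows to the $\aaa''$-columns), while $D$ carries the off-diagonal column $\cc_0''$ in position $(2,3)$. Hence $(CD)_{23}=-\diag(d''^{-1})\,\cc_0''$, whereas $(DC)_{23}=0$ because the entire last block-column of $C$ vanishes; since the entries of $\cc_0''$ are $a_{j,0}/2d_j$ with $a_{j,0}$ arbitrary, the commutator is generically nonzero whenever $l<r$. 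Indeed $D$ is not block diagonal — you list its couplings $\cc_0'$, $\cc_0''$, $2d'\cc_0'$ yourself — so "$C$ commutes with the corresponding diagonal blocks of $D$" does not establish $[C,D]=0$, and the standard hypothesis for $\det X=\det(AD-BC)$ fails.

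The repair is immediate, and it is what the paper does: $A=\iota I_{r+1}$ is scalar, so the Schur complement gives $\det X(\iota)=\det(A)\det\bigl(D-CA^{-1}B\bigr)=\iota^{r+1}\det\bigl(D-\iota^{-1}CB\bigr)$ for $\iota\neq 0$ (hence identically, as polynomials), and $CB=0$ outright: the nonzero blocks of $B$ occupy positions $(1,1)$ and $(3,3)$, which never meet the single nonzero block of $C$ at $(2,2)$. (By the same token $BC=0$, which is why your formula $\det(AD-BC)=\det(AD)$ happens to return the correct value despite the faulty justification.) From $\det X=\iota^{r+1}\det D$ your plan goes through verbatim: clear $\cc_0''$ using the $(\iota+2)I_{r-l}$ block, then expand $\left(\begin{smallmatrix}\iota I_l & \cc_0'\\ 2d'\cc_0' & \iota-1\end{smallmatrix}\right)$, whose determinant is $\iota^{l-1}\bigl(\iota(\iota-1)-2\bigr)=\iota^{l-1}(\iota+1)(\iota-2)$ by the relation $\sum_i d_i c_{i,0}^2=1$ from Proposition \ref{coefII}. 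One last caution: your heuristic count $\iota^{2l}\cdot\iota\cdot\iota^{r-l}=\iota^{r+l+1}$ has one power too many; the $\iota$ you assign to the $x_{r+1}$ direction is absorbed into the $b$--$e$ coupling, which contributes exactly $(\iota+1)(\iota-2)$ with no factor of $\iota$, so the correct tally is $\iota^{r+1}\cdot\iota^{l-1}=\iota^{r+l}$. The induction you propose would catch this, but as written the bookkeeping double-counts.
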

\begin{proof}
If we write $X(\iota)$ in a block form $$X(\iota)=\left( \begin{array}{cc} A & B \\ C & D \end{array} \right)$$
we have that $\det X(\iota)=\det (A)\det(D)$. The first determinant is obviously $\iota^{r+1}$. To compute $\det D$, note that $D$ can be taken to an upper block triangular matrix using row and column operations. From this we can factorise $(\iota+2)^{r-l}$ from the determinant. The remaining block is given by
$$ \left( 
\begin{array}{ccc|c}
\iota & & & c_{1,0} \\
 & \ddots & & \vdots \\
 & & \iota & c_{l,0} \\ \hline
2d_1 c_{1,0} & \ldots & 2d_l c_{l,0} & \iota -1
\end{array}
\right).
$$
The result follows by expanding the determinant of this block along the top row and using induction.
\end{proof}
Thus we get resonances at $\iota=-2,-1,0$, and $2$. As usual, the resonance at $\iota=-2$ does not have any meaning, whereas $\iota=-1$ and $\iota=0$ account for the arbitrariness of $t_0$, and $a_{i,0}$ and $b_0$, respectively. Notice that the resonance at $\iota=0$ has multiplicity $r+l$ but we only have $r+1$ free parameters entering the expansion at this stage (recall that $l\geq 1$ by Proposition \ref{leadorder}). Thus, if $l>1$ there are not enough free parameters entering the expansion at this resonance and therefore \eqref{xu} does not pass either version of the Painlev\'e test in general ($l=1$ still doesn't pass the test due to the resonance at $\iota=-2$). Nevertheless, we will see that compatibility conditions are satisfied at the top resonance $\iota=2$ and therefore there might be expansions that correspond to integrable subsystems.
\begin{pro} \label{ellipsol}
Let $(\alpha_1,\ldots, \alpha_l)$ be a rational point with nonzero entries on the ellipsoid $d_1\alpha_1^2+\ldots+ d_l\alpha_l^2=4$. The associated series expansions \eqref{series} satisfy the compatibility conditions at the top resonance $\iota=2$ for any choice of leading coefficients $a_{i,0}$ and $b_0$. Furthermore, the free parameter at the top resonance can be chosen so that the solution lies on the set $H=0$.
\end{pro}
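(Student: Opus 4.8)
The plan is to mimic the strategy of the two preceding cases (Proposition~\ref{r=1} and the $\alpha_1=-2$ proposition): compute the first few steps of the recursion \eqref{recr4} explicitly, verify that the right-hand side vector $\vv_2$ at the top resonance lies in the image of $X(2)$, exhibit the one-parameter family of solutions at that step, and then select the parameter so the $t^0$-coefficient of the $H=0$ expression vanishes. Since the resonance at $\iota=2$ is simple (the factor $(\iota-2)$ appears to the first power in $\det X(\iota)$), I expect exactly one free parameter to enter, matching the claim.

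First I would run the recursion up to $i$ with $\iota=iQ=2$. The key structural observation is that the nonlinear (convolution) terms on the right of \eqref{recr4} only involve lower-order coefficients, and by Proposition~\ref{coefII} the leading data is fixed up to the free parameters $a_{i,0}$ and $b_0$. I would argue, as in the $\alpha_1=-2$ case, that the right-hand side vanishes (or is at least harmless) at the intermediate steps $0<\iota<2$: the convolution sums $\sum_{k=1}^{i-1}$ are empty or involve coefficients that are still undetermined-but-zero by induction, and the shifted terms involving $a_{j,i-s_j}$ and the $Q^{-1}$-shifted $x_{r+1}$-contributions only switch on at specific levels dictated by the $s_j$ and by $2Q^{-1}$, $4Q^{-1}$. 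The main bookkeeping is to track exactly which of these shifted source terms are active by the time $\iota$ reaches $2$; this depends on the particular rational point $(\alpha_1,\ldots,\alpha_l)$ through the integers $s_j$.

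To check compatibility I would compute $\vv_2$, the source vector at $\iota=2$, and show it is orthogonal to the left null vector of $X(2)$ (equivalently, that it lies in $\operatorname{Im}X(2)$). Because $X(\iota)$ has the same block structure $\left(\begin{smallmatrix}A&B\\C&D\end{smallmatrix}\right)$ analyzed in the determinant computation, with $\operatorname{rank}X(2)=2r+1$, the cokernel is one-dimensional and I can identify its generator explicitly from the $D$-block. The solution at this step will then be a particular solution plus $\lambda\,\kappa$, where $\kappa$ spans $\ker X(2)$; I would read off $\kappa$ from the same reduced $(l+1)\times(l+1)$ block that appeared in the determinant argument, specialized to $\iota=2$.

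Finally, for the $H=0$ selection, I would use that the quantity $\sum_k d_k u_k^2-u_{r+1}^2+\sum_k x_k+1$ is constant along solutions of \eqref{xu}, so it suffices to kill its $t^0$-coefficient. Expanding this coefficient in terms of the already-computed $c_{k,0},c_{k,1},c_{k,2},e_0,e_1,e_2,a_{k,0},a_{k,2}$ gives a linear equation in $\lambda$, and solving it pins down the free parameter exactly as in the earlier propositions. \textbf{The hard part} will be the combinatorial control of the shifted source terms: unlike the $r=1$ and $\alpha_1=-2$ cases where all intermediate $\vv_i$ vanished cleanly, here the exponents $\alpha_j=-2+s_jQ$ vary with the chosen ellipsoid point, so the steps at which the $a_{j,i-s_j}/d_j$ terms and the $x_{r+1}$-convolutions become nonzero differ case by case. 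I expect the cleanest argument is to show that \emph{whatever} nonzero source appears at $\iota=2$ automatically lands in $\operatorname{Im}X(2)$ by a structural (rank/cokernel) argument, rather than verifying the inclusion separately for each rational point; establishing this uniform compatibility is the crux of the proof.
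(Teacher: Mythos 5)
Your overall skeleton matches the paper's: locate the source vector at the top resonance, check that it lies in $\operatorname{Im}X(2)$, and then tune the kernel parameter $\lambda$ to kill the $t^0$-coefficient of $\sum_k d_k u_k^2-u_{r+1}^2+\sum_k x_k+1$ (the $H=0$ selection step you describe is indeed how the paper proceeds). But the crux you explicitly defer --- a uniform structural argument showing that \emph{whatever} nonzero source appears at $\iota=2$ automatically lands in $\operatorname{Im}X(2)$ by rank/cokernel considerations --- is a genuine gap, and no such argument exists in the paper. Nor should one be expected: orthogonality of the source to the one-dimensional cokernel of $X(2)$ is exactly the kind of nontrivial identity whose failure is what the Painlev\'e test is designed to detect; if it followed from the rank of $X(2)$ alone, every system with a simple top resonance would pass compatibility, and the test would be vacuous.

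What the paper actually does at this point is a classification followed by case-by-case computation. The ellipsoid constraint (with $d_i\geq 2$, hence $\alpha_1>-\sqrt 2$) limits the interaction patterns that can produce a nonzero source at the resonance to finitely many: $s_1+s_j=2Q^{-1}$ (realizable only when $r=2$, $d_1=d_2=2$, $\alpha_1=\alpha_2=-1$), $2s_1=2Q^{-1}$ (i.e.\ $\alpha_1=-1$), and $3s_1=2Q^{-1}$ (i.e.\ $\alpha_1=-4/3$, forcing $d_1=2$). In the generic cases $\alpha_1>-1$ the source vanishes identically at the resonance and compatibility is trivial. In each exceptional case, compatibility is then verified by hand: one computes the nonzero \emph{intermediate} coefficients --- contrary to your claim that intermediate steps are ``empty or undetermined-but-zero by induction,'' the shifted term $a_{1,0}/d_1$ switches on at $i=s_1$, so $\vv_{s_1}\neq 0$ (and $\vv_{2s_1}\neq 0$ when $\alpha_1=-4/3$) --- and substitutes them into the inner product of the source with the cokernel generator $(0,0\,\vert\, d'\cc_0',-1)$, using identities extracted from the block-$D$ equations at those earlier steps. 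The $\alpha_1=-4/3$ case requires explicitly evaluating five coefficients (e.g.\ $a_{1,s_1}=-\tfrac{9}{20}a_{1,0}^2$, $e_{2s_1}=-\tfrac{243}{700}a_{1,0}^2$) before the cancellation appears. Without this classification and these computations, the compatibility claim is asserted precisely where the real work lies, so your proposal as written does not close the proof.
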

\begin{proof}
To make computations easier we will assume in what follows that $r=l\geq 2$ (so the double primed terms are not present). The right hand side of \eqref{recr4} can have various different behaviours depending on the dimensions $d_j$ and the leading exponents $\alpha_j$. We identify five different cases.\\
 
The simplest one occurs when $\alpha_1>0$. This implies that $s_j>2Q^{-1}$ for all $1\leq j \leq r$ and thus the right hand side of \eqref{recr4} vanishes at the top resonance $i=2Q^{-1}$. Compatibility conditions are trivially satisfied and the general solution is given by taking $$\vv_{2Q^{-1}}=\lambda  \, (\, \aaa_0' \cc_0'\, , \, b_0 \, \vert- \cc_0' \, , 2) .$$
It is easy to see that setting $\lambda=1/6$ generates a series that remains on $H=0$.\\

Next we consider the case when $-1<\alpha_1<0$. This condition means that $s_j>Q^{-1}$ for all $1\leq j \leq r$. It also implies that the right hand side of \eqref{recr4} vanishes at the resonance $i=2Q^{-1}$ since $s_j+s_k>2Q^{-1}$ for all $1\leq j, k \leq r$ (note that coefficients in the recursion can be nonvanishing only at steps where $i$ is a linear combination of $s_1, \ldots, s_r, 2Q^{-1}$ with integer coefficients). Thus, compatibility conditions are satisfied at the top resonance. The solution is given by taking $\vv_{2Q^{-1}}$ as before. It is possible to verify that terms with negative exponents in $ \sum_{k=1}^{r} d_k u_k^2-u_{r+1}^2+\sum_{k=1}^r x_r +1 $ cancel each other out and only terms with nonnegative exponents remain. Indeed, the coefficient of $t^{-2}$ is easily shown to be zero, whereas the coefficients of $t^{\alpha_j}$ for $\alpha_j<0$ can be shown to vanish using the equations in the bottom right block of $X(\iota)$ (what we called $D$ above). To see this multiply each of the equations corresponding to $\dot{u}_j$, $1\leq j \leq r$ in \eqref{recr4} by $-2 d_j c_{j,0}$ and then add them all to $\iota$ times the bottom  equation. The resulting expression shows that $(\iota+1)e_{i}=a_{j,0}$ when $i=s_j$, which together with the last equation proves the claim. Thus, all we have to do is make sure the coefficient of $t^0$ vanishes. This is achieved by taking $\lambda=1/6$ as before.\\

The assumption $\alpha_1>-1$ constitutes the general case of the recursion since this exponent must satisfy $d_1\alpha_1^2\leq 4$ (with equality holding if $r=1$). If $\alpha_1\leq -1$, then $d_1\leq 4$, so all that is left is to analyze the recursion when the first factor has low dimension. The condition $\alpha_1 \leq -1$ adds some complexity because the right hand side of \eqref{recr4} can cease to vanish at the resonance $\iota =2$. There are two ways in which this can happen:
\begin{enumerate}
	\item If two leading exponents interact with each other, that is, if $s_1+s_j=2Q^{-1}$, $2s_1+s_j=2Q^{-1}$, or $3 s_1+s_j=2Q^{-1}$ for some $j$. Note that these are the only possible combinations that could potentially add up to $2Q^{-1}$ since the leading exponents have to lie on the ellipsoid and thus $\alpha_1>-\sqrt 2$.
	\item If the first leading exponent interacts with itself, that is, if $2s_1=2Q^{-1}$ or $3s_1=2Q^{-1}$. Again, the fact that the exponents lie on the ellipsoid means that these are the only cases we need to consider.
\end{enumerate}

The first kind of interaction occurs only when $r=2$, $\alpha_1=\alpha_2=-1$, and $d_1=d_2=2$. To see this just note the condition $s_1+s_j=2Q^{-1}$ is equivalent to $\alpha_1+\alpha_j=-2$. This, together with the restriction $d_1 \alpha_1^2+ d_j \alpha_j^2 \leq 4$ yields the claim (a similar analysis rules out the other possibilities in (1)). On the other hand, (2) occurs when $\alpha_1=-1$ or $\alpha_1=-4/3$ (note that the latter case holds only if $d_1=2$). We study each of these scenarios individually.\\

First assume $r=2$, $d_1=d_2=2$, and $\alpha_1=\alpha_2=-1$. This implies that $Q=1$ and $s_1=s_2=1$. To check compatibility conditions we need to verify that the vector on the right hand side of \eqref{recr4} is in the image of $X(i)$ when $i=2$. This happens if the right hand side is orthogonal to $\ker X(2)^T$, which is in turn spanned by $(0,0,0,1,1,-1)$. Therefore, we need to check that the inner product
$$ \left( \begin{array}{c}
-c_{1,1}\, e_1+a_{1,1}/2 \\
-c_{2,1}\, e_1+a_{2,1}/2 \\
-2c_{1,1}^2-2c_{2,1}^2
\end{array} \right)\cdot \left( \begin{array}{c}
1 \\
1 \\
-1
\end{array} \right) $$
vanishes. Using the equations corresponding to the block $D$ for $X(1)$ in the recursion we see that $2c_{i,1}^2=-c_{i,1}\, e_1 +a_{i,0} \, c_{i,1}$, which together with the equations from block $A$ and the bottom equation in block $D$ show that the desired product vanishes. \\

The case $r>1$, $\alpha_1=-1$, and $\alpha_2>-1$ can be dealt with in a completely analogous way. This time, the kernel $\ker X(2)^T$ is spanned by $(0,0 \, \vert \, d'\cc_0'\, ,-1)$. Simplifying the expression for each $-2c_{k,s_1}^2$, $1 \leq k \leq r$, using the equations from block $D$ as before, and further use of the first and last equations respectively, we see that the right hand side of the recursion is orthogonal to $\ker X(2)^T$, as desired.\\

The $\alpha_1=-4/3$ case is more complicated because we have to compute two steps in the recursion. Compatibility conditions are satisfied if the right hand side is in the image of $X(2)$, as usual. This happens if and only if
\begin{equation} \label{innerproduct}
 \left( \begin{array}{c} -(c_{1,2s_1} e_{s_1}+c_{1,s_1} e_{2s_1})+ \frac{a_{1,2s_1}}{2} \\
\vdots \\
-(c_{r,2s_1} e_{s_1}+c_{r,s_1} e_{2s_1})\\
-2d_1c_{1,2s_1}c_{1,s_1}-\ldots -2d_rc_{r,2s_1}c_{r,s_1}
\end{array} \right) \cdot
\left( \begin{array}{c}d_1 c_{1,0} \\
\vdots \\
d_r c_{r,0}\\
-1
\end{array} \right)=0.
\end{equation}
In order to verify that this product vanishes it is convenient to rewrite the terms at the bottom using the equations at step $i=s_1$ (equivalently $\iota=2/3$) of the recursion. In particular, for any $1\leq j \leq r$
$$ 2d_jc_{j,2s_1}\,c_{j,s_1}=d_j\,c_{j,2s_1}(2c_{j,s_1} )=-3d_j\, c_{j,2s_1}\, c_{j,0}\,e_{s_1}+\delta_{1,j} \, 3a_{1,0}\, c_{1,2s_1}. $$
With this in mind, the inner product above becomes
$$ \sum_{j=1}^r -4d_j \, c_{j,2s_1}\, c_{j,0}\,e_{s_1} - \sum_{j=1}^r d_j \, c_{j,s_1} \, c_{j,0}\, e_{2s_1} +3a_{1,0}\, c_{1,2s_1}+ \frac{2a_{1,2s_1}}{3}.$$
This expression can be simplified using the relations at step $i=2s_1$ (equivalently $\iota=4/3$) of the recursion. In particular, adding appropriate multiples of the first $r$ equations in block $D$ of $X(4/3)$ we can show that
$$\left( \sum_{j=1}^r -4d_j \, c_{j,2s_1}\, c_{j,0}\right)e_{s_1}= \left( 3e_{2s_1}+\frac{1}{2}e_{s_1}^2-2a_{1,s_1} \right) e_{s_1}. $$
Likewise, using the bottom equation at step $\iota=2/3$ we get
$$- \left( \sum_{j=1}^r d_j \, c_{j,s_1} \, c_{j,0}\right) e_{2s_1}= -\frac{1}{6}e_{2s_1}\, e_{s_1}. $$
Further use of the top equation in block A at step $\iota=4/3$ allows us to get rid of the $a_{1,2s_1}$ term.  The product \eqref{innerproduct} is equivalent to
$$ \left( 3e_{2s_1}+\frac{1}{2}e_{s_1}^2-2a_{1,s_1} \right)e_{s_1}- \frac{1}{6}e_{2s_1}\, e_{s_1}+ 2 a_{1,0}\, c_{1,2s_1}- a_{1,s_1}\, c_{1,s_1} =0.$$
Although seemingly still complicated, the expression above is useful because it only depends on five coefficients. They can be computed directly from the recursion to obtain
\begin{align*}
a_{1,s_1}= -\frac{9}{20}a_{1,0}^2 \qquad c_{1,s_1}&= \frac{3}{20} a_{1,0} \qquad e_{s_1}= \frac{3}{5}a_{1,0} \\
c_{1,2s_1}=-\frac{351}{5600}a_{1,0}^2 \quad & \quad e_{2s_1}= -\frac{243}{700}a_{1,0}^2.
\end{align*}
Substituting these values in the expression above shows that the inner product \eqref{innerproduct} vanishes, as desired.

The arguments above show that compatibility conditions are satisfied in each of the three cases where the right hand side ceases to vanish at the top resonance. To see that the free parameter can be chosen so that the series lies on $H=0$ just note that this parameter comes from adding a multiple of a generator of $\ker X(2)$ to a particular solution of the recursion. An argument similar to the ones above shows that this multiple can be chosen so that the coefficient of $t^0$ vanishes (we do not compute the explicit value for $\lambda$ this time because it depends on the coefficients $a_{j,0}$ in a complicated way).

Finally, notice that if $l\neq r$ the results above still hold since doubly primed factors do not have any effect on the compatibility conditions. This is because the equations for $a_{j,0}$ ($l+1 \leq j \leq r$) are uncoupled from the rest of the system for $i\leq 2Q^{-1}$. As before, we can obtain solutions that lie on $H=0$ by finding an adequate value for the free parameter at the top resonance.
\end{proof}

\begin{rem} \label{haasemin}
The problem of finding rational points on an ellipsoid is itself interesting from the point of view of number theory. In this regard, an important result concerning rational quadratic forms is the Haase-Minkowski theorem (we refer the reader to section 1.7 of \cite{borevich} for an introduction to this topic) which states that a rational quadratic form represents zero in the field of rational numbers if and only if it represents zero in the field of real numbers and in all fields of $p$-adic numbers. 

By considering the indefinite quadratic form $d_1 \alpha_1^2+ \ldots + d_l \alpha_l^2-z^2$, the Haase-Minkowski theorem can be used to show that if $l\geq 4$ there always exist rational points on the ellipsoid. Also, it is a standard result in number theory that  for $l \geq 2$, whenever it is non-empty, the set of rational points on the ellipsoid is infinite and dense (in the metric topology). 

When $l=2,3$ the Haase-Minkowski theorem can be used to rule out cases where the ellipsoid has no rational points. For example, if $l=3$ and $d_i=7$ for $i=1,2,\, 3$, we obtain the quadratic form $7 \alpha_1^2+7\alpha_2^2+7\alpha_3^2-z^2$. This form does not represent zero in the rational numbers. To see this, suppose that there exists an integer solution to $7 \alpha_1^2+7\alpha_2^2+7\alpha_3^2=z^2$. Dividing both sides by 4 as necessary, we can assume that not all the $\alpha_i$'s are even. Since every square is congruent to 0, 1, or 4 modulo 8, trying out all the combinations shows that the equation above has no solutions modulo 8.
\end{rem}

We summarize the results we obtained in this section as follows. For the $\alpha_1>-2$ case we found that formal series solutions were parametrized by the set of rational points on the ellipsoid $d_1 \alpha_1^2 + \ldots +d_l \alpha_l^2=4$. Solutions form an $r+3$-parameter family and therefore these series do not pass the weak Painlev\'e test. In all cases, compatibility conditions hold at the top resonance $\iota=2$. In the context of Ricci solitons, after accounting for the parameters $t_0$, $b_0$, and the parameter at the top resonance $\iota=2$, we obtain an $r$-parameter family of formal solutions that correspond to steady Ricci solitons.  Finally, from the proof of Proposition \ref{ellipsol} we see that the case $r=l=2$, $\alpha_1=\alpha_2=-1$, $d_1=d_2=2$ is singled out as the only one where expansions on integer powers of $t$ exist.

\section{The B\'erard Bergery ansatz}\label{berard}
We now turn our attention to the analysis of the equations corresponding to the B\'erard Bergery ansatz. Recall that in this case the underlying manifold is the total space of an $S^1$ bundle over a Fano K\"ahler-Einstein base and the steady Ricci soliton equation is equivalent to \eqref{xv}. We look for solutions of the form: 
\begin{equation} \label{ansatzbergery}
\begin{aligned}
x_j&=\sum_{i=0}^\infty a_{j,i}\, t^{\alpha_j+iQ}   && \qquad v_j= \sum_{i=0}^\infty c_{j,i}\, t^{\gamma_j+iQ} \qquad \text{for } 1\leq j\leq 2  \\ 
x_{3}&= \sum_{k=0}^\infty b_k\, t^{\beta+kQ}  && \qquad v_{3}=\sum_{k=0}^\infty e_k\,  t^{\epsilon+kQ},
\end{aligned}
\end{equation}
where $Q$ is a rational number to be determined. The possible leading order terms are the following.
\begin{subequations}
\begin{align}
a_{1,0} \alpha_1 t^{\alpha_1 -1} \, &: \, -2 a_{1,0} c_{1,0}\, t^{\alpha_1 + \gamma_1}  \label{a1}\\
a_{2,0} \alpha_2 t^{\alpha_2 -1} \, &: \, -4 a_{2,0} c_{1,0} \,t^{\alpha_2 + \gamma_1}, \, -2 a_{2,0} c_{2,0} t^{\alpha_2 + \gamma_2} \label{a2}\\
b_0 \beta t^{\beta -1} \, &: \, b_0 e_0 t^{\beta + \epsilon} \label{b1} \\
c_{1,0} \gamma_1 t^{\gamma_1 -1 } \, &: \, -c_{1,0} e_0 t^{\gamma_1 + \epsilon}, \,\, d_2^{-1}a_{1,0}\, t^{\alpha_1}, \,\, 2 d_2^{-1}a_{2,0} \, t^{\alpha_2}  \label{c1}\\
c_{2,0} \gamma_2 t^{\gamma_2 -1 } \, &: \, -c_{2,0} e_0 t^{\gamma_2 + \epsilon}, \,\, a_{2,0} t^{\alpha_2}  \label{c2}\\
e_0 \epsilon t^{\epsilon -1 } \, &: \, -d_2 c_{1,0}^2t^{2\gamma_1} , \, -c_{2,0}^2t^{2\gamma_2} \label{d1}
\end{align}
\end{subequations}
The behaviour of the leading order terms is similar to the $r>1$ case for warped products, although \eqref{a2} and \eqref{c1} introduce some additional complications. We have the following lemma.
\begin{lem}
The leading order terms satisfy points (1)-(5) in Lemma \ref{leadorder}. Additionally, the following properties hold:
\begin{enumerate} \setcounter{enumi}{5}
	\item There exists an $i$ for which $\gamma_i=-1$.
	\item If $\alpha_i=-2$ then $\gamma_i=-1$.
	\item If $\alpha_1 = 0$ then $\gamma_1=1$, $\gamma_2=-1$, and $\alpha_2=2$.
	\item If $\alpha_1=-2$ then $\alpha_2=-2$.
	\item $\alpha_1=0$ if and only if $\gamma_1>-1$.
	\item If $\gamma_1 < \gamma_2$ then $2\alpha_1=\alpha_2$.
\end{enumerate} 
\end{lem}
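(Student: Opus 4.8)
The plan is to carry out the standard dominant-balance analysis directly on the six balancing relations \eqref{a1}--\eqref{d1}, using repeatedly that each right-hand side is a sum of monomials of definite exponent and definite sign, so that the lowest-order monomial on each side must be matched and monomials with strictly negative coefficients can never cancel. First I would transcribe the proofs of (1)--(5) from Lemma \ref{leadorder}: (1) by showing the all-$\alpha_i=0$ configuration is non-singular, and (2)--(3) by balancing \eqref{a1}--\eqref{a2} and \eqref{c1}--\eqref{c2} against their left-hand leading exponents. Property (6) comes next: by (1) some $\alpha_j\neq 0$, and then \eqref{a1} (if $\alpha_1\neq 0$) or \eqref{a2} (if only $\alpha_2\neq 0$) forces $\min(\gamma_1,\gamma_2)=-1$. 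Property (6) then drives (4): $\epsilon=-1$ is the only option, since the $t^{-2}$ monomial (6) creates on the right of \eqref{d1} has a strictly negative coefficient and cannot cancel, so the left-hand exponent $\epsilon-1$ must equal $-2$; and (5), $\beta=e_0$, is then the leading coefficient balance of \eqref{b1}.

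The two structural items are quick. For (10), the forward direction uses that $\alpha_1=0$ annihilates the leading left-hand term of \eqref{a1}, so $\gamma_1$ must match a strictly higher left-hand order and $\gamma_1>-1$; the converse is the contrapositive, since $\alpha_1\neq 0$ forces $\gamma_1=-1$ directly from \eqref{a1} together with (2). For (7), the $i=1$ case is again the \eqref{a1} balance, and for $i=2$ the forcing monomial $a_{2,0}t^{\alpha_2}=a_{2,0}t^{-2}$ in \eqref{c2} lies strictly below every other term unless $\gamma_2-1=-2$, giving $\gamma_2=-1$.

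I expect the main obstacle to be (8), where three exponents must be pinned down at once. Assuming $\alpha_1=0$, property (10) gives $\gamma_1>-1$ and then (6) gives $\gamma_2=-1$. The key chain of balances is: the forcing term $2d_2^{-1}a_{2,0}t^{\alpha_2}$ in \eqref{c1} would be a strictly dominant, unbalanced monomial if $\alpha_2=-2$, so $\alpha_2>-2$; then the $t^{-2}$ balance of \eqref{c2} gives $e_0=1$; the $t^{-2}$ balance of \eqref{d1} gives $c_{2,0}^2=e_0=1$; and \eqref{a2} gives $\alpha_2=-2c_{2,0}$, so $\alpha_2>-2$ forces $c_{2,0}=-1$ and $\alpha_2=2$. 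Finally $\gamma_1=1$ follows because in \eqref{c1} the left-hand leading term and $-c_{1,0}e_0t^{\gamma_1-1}$ would otherwise force $\gamma_1=-e_0=-1$, so a third monomial must share the exponent $\gamma_1-1$; the only candidate is $d_2^{-1}a_{1,0}t^0$, giving $\gamma_1=1$ (the alternative $\gamma_1-1=\alpha_2=2$ is excluded, as it would leave the $t^0$ monomial unbalanced). Property (9) is similar in spirit: $\alpha_1=-2$ gives $\gamma_1=-1$ and, from \eqref{a1}, $c_{1,0}=1$; one excludes $\alpha_2=0$ (an unbalanced $t^{-1}$ in \eqref{a2}) and $\gamma_2>-1$ (which would give $\alpha_2=-4c_{1,0}=-4$ in \eqref{a2}, violating (3)); with $\gamma_2=-1$, \eqref{d1} yields $e_0=d_2+c_{2,0}^2>2$, contradicting the $e_0=1$ that \eqref{c2} would demand if $\alpha_2>-2$, so $\alpha_2=-2$.

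With all of the above in place the final property (11) is short, and I would present it last. If $\gamma_1<\gamma_2$, then $\gamma_i\geq -1$ (property (2)) together with some $\gamma_i=-1$ (property (6)) forces the smaller exponent to be the one equal to $-1$, so $\gamma_1=-1$ and $\gamma_2>-1$; property (10) then gives $\alpha_1\neq 0$, and the \eqref{a1} balance gives $c_{1,0}=-\alpha_1/2$. In \eqref{a2} the term $-2a_{2,0}c_{2,0}t^{\alpha_2+\gamma_2}$ is strictly subleading because $\gamma_2>-1$, while $\alpha_2\neq 0$ (otherwise the monomial $-4a_{2,0}c_{1,0}t^{\gamma_1}=-4a_{2,0}c_{1,0}t^{-1}$ would be an unbalanced lowest-order term), so the leading balance of \eqref{a2} reads $a_{2,0}\alpha_2=-4a_{2,0}c_{1,0}$, i.e. $\alpha_2=-4c_{1,0}=2\alpha_1$, as claimed. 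Thus essentially all the difficulty is concentrated in the simultaneous exponent bookkeeping of (8) and the low-dimension/sign analysis of (9); once the admissible exponent patterns are fixed, the target (11) drops out from the two $x$-equation balances alone.
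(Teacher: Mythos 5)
Your proposal is correct and follows essentially the same dominant-balance strategy as the paper: the same six relations \eqref{a1}--\eqref{d1}, the same case splits, and the same deduction chains for (7), (8), (10), and (11). The only deviations are minor and benign — you make explicit that (6) must be established before (4) (a dependency the paper leaves implicit), you fix the sign of $c_{2,0}$ in (8) by combining \eqref{d1} with \eqref{a2} rather than reading it off \eqref{d1} alone, and in the $\gamma_2=-1$ subcase of (9) you derive the contradiction from \eqref{d1} against \eqref{c2} where the paper plays \eqref{c1} against \eqref{c2}; both routes are valid, and your parenthetical exclusion of $\alpha_2=0$ there (which overlooks a possible cancellation of the two $t^{-1}$ monomials in \eqref{a2} when $c_{2,0}=-2$) is harmless because that configuration is killed anyway by your $e_0$ contradiction, which needs only $\gamma_2=-1$ and $\alpha_2>-2$.
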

\begin{proof}
The proof for (1)-(5) is the same as before. (6) follows easily from \eqref{a1} and \eqref{a2}. For (7) note that $\alpha_1 = -2$ implies $\gamma_1 =-1$ by \eqref{a1}. If $\alpha_2 =-2$, then \eqref{c2} gives $\gamma_2=-1$. To prove (8) note that if $\alpha_1=0$ \eqref{a1} implies that $\gamma_1>-1$ and consequently $\gamma_2=-1$. \eqref{c1} implies that $\alpha_2>-2$ and so $e_0=1$. By \eqref{d1} $c_{2,0}=-1$ and by \eqref{a2} $\alpha_2=2$. Using \eqref{c1} we deduce that $\gamma_1=1$. To prove (9) suppose by contradiction that $\alpha_2>-2$. If $\gamma_2=-1$ then \eqref{c1} and \eqref{c2} give incompatible values for $e_0$. On the other hand, if $\gamma_2>-1$ then \eqref{a1} gives $c_{1,0}=1$ and then \eqref{a2} shows $\alpha_2=-4$, a contradiction. Finally, (10) and (11) are obvious from \eqref{a1} and \eqref{a2}.
\end{proof}
The lemma above greatly reduces the admissible combinations of leading exponents. However, there are still many more combinations than in the case for warped products. To simplify the computations we will focus exclusively on one of the cases identified by the previous lemma, that is, when $\alpha_1 = 0$, $\alpha_2=2$, $\gamma_1=1$, and $\gamma_2=-1$. The reason for focusing solely on this type of expansion will become apparent in the next section. It is easy to see that whole list of leading terms is given by
\begin{equation} \label{leadorfamV}
\begin{array}{llll}
a_{1,0}=\textnormal{Arbitrary} & c_{1,0}= a_{1,0}/2d_2 \quad & \alpha_1=0 \quad & \gamma_1=1 \\
a_{2,0}=\textnormal{Arbitrary} & c_{2,0}=-1 &\alpha_2=2 \quad & \gamma_2=-1 \\
b_0=\textnormal{Arbitrary} & e_0=1\quad & \beta=1 \quad & \epsilon=-1.
\end{array}
\end{equation}
Since all the leading exponents are integers we can set $Q=1$ in \eqref{ansatzbergery}. The recursion relation is now given by
\begin{equation}\label{famV}
\begin{aligned}
 \left(  \begin{array}{ccc|ccc}
i  & & & 0 & & \\
 & i & & & 2a_{2,0} & \\
 & & i & & & -b_0 \\ \hline
-1/d_2 &  &  & i+2 & & c_{1,0} \\
 & 0 & & & i & -1 \\
 & & 0 &    & -2 & i-1 \\
\end{array} \right)  &
\left(  \begin{array}{c}
a_{1,i} \\
a_{2,i}\\
b_i\\ \hline
c_{1,i} \\
c_{2,i}\\
e_i
\end{array} \right)=\\
\left( \begin{array}{c}
-2\sum_{k=0}^{i} a_{1,i-2-k}\, c_{1,k} \\
-4\sum_{k=0}^{i} a_{2,i-2-k}\,c_{1,k}-2\sum_{k=1}^{i} a_{2,i-k}\,c_{2,k}\\
\sum_{k=1}^{i-1} b_{i-k} \, e_k\\ \hline
-\sum_{k=1}^{i-1} c_{1,i-k}\, e_k+2d_2^{-1}a_{2,i-2} \\
-\sum_{k=1}^{i-1} c_{2,i-k}\, e_k+a_{2,i-4} \\
-d_2 \sum_{k=0}^{i} c_{1,i-4-k}\, c_{1,k} - \sum_{k=1}^{i-1} c_{2,i-k} \, c_{2,k}  \end{array} \right)&.
\end{aligned}
\end{equation}
The determinant $\det X(i)$ factorises as
$$\det X(i)=i^3(i+2)(i+1)(i-2),$$
so that we have resonances at $i=-1,0,$ and $2$, with the root on $i=-2$ carrying no meaning as usual. As before, it is easy to check that compatibility conditions at the top resonance are satisfied and we obtain a 5-parameter family of solutions of \eqref{xv}. This means that the system does not pass the full Painlev\'e test in this cas. In terms of the geometric interpretation, we can choose the top parameter so that we obtain solutions to the Ricci soliton equation and we therefore obtain a 2-parameter family of Ricci solitons.
\begin{pro}
The series expansions corresponding to the  B\'erard Bergery ansatz satisfy compatibility conditions at the top resonance $i=2$. Moreover, the free parameter at the resonance can be chosen so that the series lie on the hypersurface $H=0$.
\end{pro}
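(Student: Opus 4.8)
The plan is to mirror Propositions \ref{r=1} and \ref{ellipsol}: compute the first two steps of the recursion \eqref{famV}, verify that its right-hand side at $i=2$ lies in the image of the singular matrix $X(2)$, and then use the kernel of $X(2)$ to place the series on $\{H=0\}$. The resonance at $i=2$ arises solely from the degeneracy of the lower-right block coupling $c_{2,i}$ and $e_i$ (at $i=2$ the rows coming from $\dot v_2$ and $\dot v_3$ become proportional); a direct computation then gives that $\ker X(2)^T$ is spanned by $(0,0,0,0,1,1)$, so the compatibility condition reduces to the single requirement that the fifth and sixth entries of the right-hand side of \eqref{famV} sum to zero at $i=2$. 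The solution kernel $\ker X(2)$ is spanned by $(0,-a_{2,0},b_0,-c_{1,0}/2,1,2)$, whose coefficient $\lambda$ is the free parameter entering at the resonance.

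The decisive point is that the recursion is trivial at $i=1$. Since the singular series $x_2$ and $v_1$ begin at the exponents $t^2$ and $t^1$ while $x_1,x_3,v_2,v_3$ carry the leading powers of \eqref{leadorfamV}, every bilinear term on the right-hand side of \eqref{famV} at $i=1$ has a shifted index forced to be negative; by the convention $a_{j,k}=0$ for $k<0$ the whole right-hand side vanishes, and since $\det X(1)=-6\neq 0$ this forces $\vv_1=0$, i.e. $a_{1,1}=a_{2,1}=b_1=c_{1,1}=c_{2,1}=e_1=0$. Substituting this back into \eqref{famV} at $i=2$, the fifth and sixth entries of the right-hand side involve only step-one data and hence both vanish, so the compatibility condition holds; the remaining rows then determine $a_{1,2},a_{2,2},b_2,c_{1,2}$ in terms of $\lambda$, with $c_{2,2}=\lambda$ and $e_2=2\lambda$.

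For the statement about $H=0$ I would use that the bracket $E:=d_2 v_1^2+v_2^2-v_3^2+x_1+x_2+1$ appearing in the $H=0$ condition is a first integral of \eqref{xv}, as one checks directly from $\dot E=0$ exactly as for the analogous bracket of \eqref{xu}. Being constant on the formal solution, $E$ can carry no nonzero power of $t$: its $t^{-2}$ coefficient $c_{2,0}^2-e_0^2$ vanishes because $c_{2,0}=-1$ and $e_0=1$, its $t^{-1}$ coefficient vanishes because $\vv_1=0$, and its $t^0$ coefficient works out to $-6\lambda+a_{1,0}+1$, which is linear in $\lambda$ with nonzero slope. Choosing $\lambda=(a_{1,0}+1)/6$ annihilates it and forces $E\equiv 0$, hence $H=0$.

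The only genuine obstacle is confirming that the shifted starting exponents of $x_2$ and $v_1$ really do kill the entire right-hand side at $i=1$, since both the compatibility at the top resonance and the vanishing of the negative-order coefficients of $E$ rest on $\vv_1=0$. Once that vanishing is verified, everything downstream is routine linear algebra.
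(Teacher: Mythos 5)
Your proposal is correct and follows essentially the same route as the paper: show $\vv_1=0$, observe that $\ker X(2)^T$ is spanned by $(0,0,0,0,1,1)$ so compatibility reduces to the vanishing of the fifth and sixth entries of the right-hand side at $i=2$, and then tune the kernel parameter to kill the $t^0$ coefficient of $d_2v_1^2+v_2^2-v_3^2+x_1+x_2+1$. Your use of the fact that this bracket is a first integral of \eqref{xv} (so only the constant term needs to be annihilated, giving $\lambda=(a_{1,0}+1)/6$) is a slightly more explicit rendering of the step the paper dispatches with ``an argument similar to the ones above,'' and your computations (the kernel $(0,-a_{2,0},b_0,-c_{1,0}/2,1,2)$, the solution structure $c_{2,2}=\lambda$, $e_2=2\lambda$) check out.
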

\begin{proof}
The right hand side of the recursion \eqref{famV} vanishes when $i=1$. For $i=2$, the vector on the right is 
$$ (a_{1,0}c_{1,0}, -4 a_{2,0}c_{1,0}, 0, 2a_{2,0}/d_2,0,0).$$
The kernel $\ker X(2)^T$ is spanned by $(0,0,0,0,1,1)$ which readily implies the vector above is in the image of $X(2)$. An argument similar to the ones above shows that the free parameter can be chosen so that the solution lies on $H=0$.
\end{proof}

\section{Convergence of formal series and conclusions}\label{conclusions}

The last step remaining in the Painlev\'e analysis of the solitons described above is to verify that the formal series converge in a punctured neighbourhood of the origin. This is readily achieved since the nonlinearities in both \eqref{xu} and \eqref{xv} are quadratic and therefore the majorization argument in section 6 of \cite{DW2001} can be employed to obtain upper bounds for the absolute value of coefficients at each step of the recursion. Therefore, the series obtained in the previous sections converge for small enough values of the argument $t$. We conlcude our study of the Ricci soliton equatations with some geometric considerations.

\subsection{Warped products}
It is illuminating to study the solitons singled out by the Painlev\'e analysis in the context of the variables used in \cite{DW2009}. Here, the metric was written (compare with \eqref{metric}) as
$$ \bar{g}= dt^2+ \sum_{i=1}^{r} h_i(t)^2 \, g_i$$
and coordinates were introduced by taking 
\begin{equation} \label{oldvariables}
 X_i:= \frac{\sqrt{d_i}}{-\dot{u}+\tr L } \, \frac{\dot{h}_i}{h_i}, \quad  \qquad  Y_i:= \frac{1}{-\dot{u}+\tr L}\, \frac{1}{h_i},
\end{equation}
where $L=L_t$ is the shape operator of the hypersurfaces $P_t $ (notice we are normalizing the scalar curvature of each factor to be equal to one, so in this case the Einstein constant of each factor is $\lambda_i=1/d_i$), and $u$ is the soliton potential. We remark that in \cite{DW2009} $X_i$ and $Y_i$ are functions of a new independent parameter $s$ which is introduced by letting $ds=(-\dot{u}+\tr L) \, dt$. This new parameter satisfies that $s\rightarrow -\infty$ as $t \rightarrow 0$. In these variables the Ricci soliton equation is shown  to be equivalent (see (2.6) and (2.7) in \cite{DW2009}) to the flow of
\begin{equation} \label{vectfield}
\begin{aligned}
X_i' &= X_i \left( \sum_{j=1}^r X_j^2 -1 \right) + \frac{Y_i^2}{\sqrt{d_i}}\\
Y_i' &= Y_i \left( \sum_{j=1}^r X_j^2 - \frac{X_i}{\sqrt{d_i}} \right),
\end{aligned}
\end{equation}
where the prime denotes differentiation with respect to $s$. In the variables used for the Painlev\'e analysis we have that $x_i=e^{-q_i}=h_i^{-2}$ for $1\leq i \leq r$ and $x_{r+1}=e^{\frac{1}{2} \dd \cdot \qq}$, so the variables \eqref{oldvariables} can be written, considered as functions of the geodesic length $t$, as
$$ X_i= \frac{\sqrt{d_i}}{2}\, \frac{\dot{x}_i}{x_i} \frac{x_{r+1}}{\dot{x}_{r+1}} \quad \qquad Y_i= \sqrt{x_i}\, \frac{x_{r+1}}{\dot{x}_{r+1}}, $$ 
where differentiation with respect to $t$ is denoted by a dot. Thus, expressions for $X_i$ and $Y_i$ are given by
\begin{align*}
X_i &= -\frac{\sqrt{d_i}}{2}\, \frac{\alpha_i \,a_{i,0} \, b_0 \, t^{\alpha_i+\beta-1} + \ldots }{\beta \, a_{i,0} \, b_0 \, t^{\alpha_i+\beta-1} + \ldots}\\[1mm]
Y_i &= \frac{\sqrt{a_{i,0}} \, b_0 \, t^{\frac{\alpha_i}{2}+\beta}+ \ldots}{\beta \, b_0 \, t^{\beta-1}+ \ldots}.
\end{align*}
Using these expressions it is possible to show that our series solutions emanate from various different equilibrium points of the vector field \eqref{vectfield}. In particular, by analyzing the asymptotic behaviour of the trajectories as $t \rightarrow 0$ we can see that the leading exponents $\alpha_i$ determine an equilibrium of \eqref{vectfield}. As before, we notice different behaviours depending on whether $\alpha_1=-2$ or $\alpha_1>-2$. 

The expansions corresponding to the $\alpha_1=-2$ case satisfy that 
$$ X_i\rightarrow \frac{\sqrt{d_i}}{e_0} \qquad \textnormal{and }  \quad \quad Y_i \rightarrow \frac{\sqrt{d_i(e_0-1)}}{e_0} \qquad \textnormal{if } 1\leq i \leq l,$$ 
and $X_i \rightarrow 0$, $Y_i \rightarrow 0$ if $l+1 \leq i \leq r$. Recalling that $e_0=\sum_{k=1}^l d_k$ for this type of series, it is easy to check that these coordinates correspond to an equilibrium of \eqref{vectfield}. 
\begin{rem}
In the particular case where $\alpha_1=-2$, $l=1$, and $r\geq 1$ the series singled out by the Painlev\'e test correspond to trajectories emanating from $X_1=1/\sqrt{d_1}$, $Y_1=\sqrt{1-\frac{1}{d_1}}$, These are precisely the trajectories analyzed in \cite{DW2009} and yield solitons which are complete and smooth at the origin (see Theorem 4.17).
\end{rem}
On the other hand, the asymptotic behaviour as $t \rightarrow 0$ of trajectories corresponding to the case $\alpha_1>-2$ is given by
$$ X_i\rightarrow -\frac{\alpha_i \, \sqrt{ d_i}}{2} \qquad \textnormal{and }  \quad \quad Y_i \rightarrow 0 \qquad \textnormal{if } 1\leq i \leq l,$$ 
and $X_i \rightarrow 0$, $Y_i \rightarrow 0$ if $l+1 \leq i \leq r$. The fact that leading exponents $\alpha_i$ lie on the ellipsoid immediately implies that these trajectories also emanate from an equilibrium of \eqref{vectfield}. 
\begin{rem}
When $\alpha_1>-2$, $l=1$, and $r\geq 1$ the series coming from the Painlev\'e analysis correspond to trajectories emanating from $X_1=\pm 1$, $Y_1=0$, and $X_i=Y_i=0$ for $i>1$. These are the trajectories analysed in \cite{ACF2015} (cf. A.1 and A.2). They correspond to solitons that are complete but not smooth at the origin (if $X_1=1$) or that blow up at the origin (if $X_1=-1$).
\end{rem}

Note that in both cases the equilibrium points lie on the zero set $\mathcal{L}=0$ of the Lyapunov function  $\mathcal{L}:=\sum_{i=1}^r X_i^2+Y_i^2-1$ introduced in \cite{DW2009}. The asymptotic behaviour at the other end is determined by the following proposition.

\begin{pro}
The series obtained in the Painlev\'e analysis correspond to trajectories of \eqref{vectfield} contained in the region $\mathcal{L}<0$ of the Lyapunov function.
\end{pro}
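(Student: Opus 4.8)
The plan is to recognize that the Lyapunov function $\mathcal{L}$ is, up to a positive factor, nothing but the Hamiltonian constraint $H=0$ rewritten in the variables of \cite{DW2009}. First I would re-express the Dancer--Wang variables \eqref{oldvariables} in terms of the Painlev\'e variables of system \eqref{xu}. Since $\dot{x}_i=-2x_iu_i$ and $\dot{x}_{r+1}=x_{r+1}u_{r+1}$, we have $\dot{x}_i/x_i=-2u_i$ and $x_{r+1}/\dot{x}_{r+1}=1/u_{r+1}$, so that
\begin{equation*}
X_i=-\frac{\sqrt{d_i}\,u_i}{u_{r+1}},\qquad Y_i=\frac{\sqrt{x_i}}{u_{r+1}}.
\end{equation*}

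Substituting these into $\mathcal{L}$ gives, after collecting terms,
\begin{equation*}
\mathcal{L}=\sum_{i=1}^r\left(X_i^2+Y_i^2\right)-1=\frac{1}{u_{r+1}^2}\left(\sum_{i=1}^r d_iu_i^2+\sum_{i=1}^r x_i-u_{r+1}^2\right).
\end{equation*}
The key point is that the parenthesis is exactly the expression occurring in the constraint $H=0$. Each family of formal series produced in Sections \ref{onefactor} and \ref{arbfactor} was arranged, through the choice of the free parameter at the top resonance, to satisfy $\sum_k d_ku_k^2-u_{r+1}^2+\sum_kx_k+1=0$; hence the parenthesis equals $-1$ and
\begin{equation*}
\mathcal{L}=-\frac{1}{u_{r+1}^2}.
\end{equation*}
Because the associated soliton trajectories are real and $u_{r+1}=e_0t^{-1}+\cdots$ with $e_0\neq 0$ (so $u_{r+1}$ is real and nonvanishing for small $t>0$), this forces $\mathcal{L}<0$ identically, which is the claim.

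The computation is brief, so the real work is only bookkeeping: checking the two change-of-variable formulas and confirming that every family singled out earlier was indeed placed on $H=0$. As a consistency check and to clarify the dynamical picture, I would also record that differentiating $\mathcal{L}$ along \eqref{vectfield} yields $\mathcal{L}'=2\bigl(\sum_jX_j^2\bigr)\mathcal{L}$; since $\sum_jX_j^2\geq 0$, the sign of $\mathcal{L}$ is preserved by the flow, so the regions $\{\mathcal{L}>0\}$, $\{\mathcal{L}=0\}$, $\{\mathcal{L}<0\}$ are each invariant and it suffices to pin down the sign at one point of the trajectory, which the constraint computation already does. I expect no genuine obstacle; the only point deserving care is that the identification of $\mathcal{L}$ with the constraint is valid on the real solutions lying on $H=0$, so the statement really concerns the geometrically meaningful one-parameter subfamilies rather than the full complex formal series.
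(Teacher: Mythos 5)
Your proof is correct and follows essentially the same route as the paper: express $X_i=-\sqrt{d_i}\,u_i/u_{r+1}$ and $Y_i=\sqrt{x_i}/u_{r+1}$ via system \eqref{xu}, then use the constraint $H=0$ to conclude $\mathcal{L}=-1/u_{r+1}^2<0$. The additional observation that $\mathcal{L}'=2\bigl(\sum_j X_j^2\bigr)\mathcal{L}$ along \eqref{vectfield} is a nice consistency check but is not needed, as the constraint computation already holds at every point of the trajectory.
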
 
\begin{proof}
Using \eqref{xu} we write $-2u_i=\dot{x}_i/x_i$ for $1 \leq i \leq r$ and $u_{r+1}=x_{r+1}/\dot{x}_{r+1}$. Substituting in the above expressions for $X_i$ and $Y_i$, and using the fact that series solutions lie on the level set $H=0$ we get that
$$ \sum_{i=1}^r X_i^2+Y_i^2= \frac{1}{u_{r+1}^2}\left( \sum_{k=1}^r d_k \, u_k^2 + x_k \right)= 1-\frac{1}{u_{r+1}^2},$$
from which the result follows.
\end{proof}
Using this we conclude that solutions coming from the Painlev\'e analysis are complete at infinity, since Proposition 3.7 in \cite{DW2009} implies that the corresponding trajectories converge to the origin in the $X_i$, $Y_i$ coordinates.\\

We finalize by relating the results of the Painlev\'e test to what is known about Ricci solitons on multiple warped products. First, as we pointed out in section \ref{onefactor}, when $r=1$ solutions corresponding to the $\alpha=-2$ case do not pass the weak (and therefore strong) Painlev\'e test. However, the resulting series are meromorphic functions of $t$ so we might find an integrable subsystem of \eqref{xu} corresponding to these expansions. On the other hand, expansions with $\alpha>-2$ have 4 free parameters if $d_1$ is a perfect square, which means that these cases pass the weak Painlev\'e test. Additionally, when $d_1=4$ the resulting series are meromorphic functions of $t$, so in this case we pass the strong Painlev\'e test. 

When $r>1$ system \eqref{xu} never passes the weak (and therefore the strong) Painlev\'e test. However, the series corresponding to the $\alpha_1=-2$ are meromorphic functions of $t$, which suggest that a subsystem might be integrable. Likewise, when $\alpha_1>-2$ the weak Painlev\'e test fails but it does single out the case $r=l=2$, $d_1=d_2=1$ as the only one for which there are solutions meromorphic on $t$.

In the context of Ricci solitons, in \cite{BDW2016} explicit closed form solutions were found for solitons in two cases. The first one occurred when there was exactly one factor of dimension 4 in the product, and the integration of the equations was accomplished by obtaining a conserved quantity for the Hamiltonian. The second was when there were two factors in the product, each of dimension two. In these case, the equations were integrated by using a superpotential for the Hamiltonian. It is interesting to note that these are exactly the cases singled out by the Painlev\'e analysis. In particular, the 4-dimensional case with one factor is the only one that passes the strong Painlev\'e test (when $\alpha>-2$). On the other hand, the case with $r>1$ never passes the Painlev\'e test (weak or strong) however, the case which comes closest to passing the strong test (in the sense of having the greatest number of free parameters) is exactly the one where there are two factors, each of dimension 2. The results of the test therefore suggest that no closed form solutions for Ricci solitons exist in warped products aside from the ones found in \cite{BDW2016}.

\subsection{B\'erard Bergery ansatz}

As described in section \ref{setup}, the B\'erard Bergery ansatz gives us metrics where the hypersurface is the total space of an $S^1$ bundle over a Fano K\"ahler-Einstein base $(B, g_B)$. In this case, the metric can be written as
$$ \bar{g}=dt^2+ f(t)^2 \theta \otimes \theta + g(t)^2 \pi^*g_B, $$
and the Ricci soliton equation becomes a second order system of ODE's for $f$, $g$ and the soliton potential $u$. This is the system studied in \cite{DW2011} (cf. section 4). Comparing with the variables introduced for the Painlev\'e analysis in section \ref{setup}, we see that these are related by
$$ f^2=k_1 \, x_2 x_1^{-2} \quad \textnormal{and} \quad  g^2=k_2 \, x_1^{-1}$$
for some constants $k_1$ and $k_2$ (again, the exact values are irrelevant). In order to obtain a complete metric, we need to compactify the metric a the $t=0$ end. This can be done in two ways. The first one is by collapsing the $S^1$ in each fibre and adding a copy of $B$ at $t=0$. In terms of $f$ and $g$ this means that we need solutions that satisfy $f(0)=0$, $\dot{f}(0)=1$, $g(0)>0$, and $\dot{g}(0)=0$. The second way in which we can compactify at the $t=0$ end is if $(B,g_B)$ is the complex projective space $\mathbb{C}P^{d_2}$ with the Fubini-Study metric. In this case we can use the Hopf fibration $S^{2d_2+1}\rightarrow \mathbb{C}P^{d_2}$, where the sphere has constant curvature equal to one, and let both factors collapse. In this case, we just need to add a point at the origin and the resulting manifold is $\mathbb{C}^{d_2+1}$. In terms of the functions, this requires us to look for solutions that satisfy $f(0)=0$, $\dot{f}(0)=1$, $g(0)=0$, and $\dot{g}(0)=\pm 1/\sqrt{2}$.

If we substitute the series for $x_1,\, x_2$ arising from the recursion (it is enough to calculate up to step $i=2$), it is easy to see to see that in both cases previous necessary conditions for smoothness at the origin are satisfied if we take $\alpha_1=0$, $\alpha_2=2$, and the free parameters $a_{1,0},\, a_{2,0}$ are related by
$$ \sqrt{\frac{a_{2,0}}{a_{1,0}^2}}=\frac{1}{k_1}.$$
Thus, any solution that can be completed by compactifying at the origin has to be contained in the family of series that satisfy $\alpha_1=0$, $\alpha_2=-2$.  
As mentioned in the introduction, steady K\"ahler Ricci solitons have been found in the context of $S^1$ bundles over a Fano K\"ahler-Einstein base in \cite{Cao1996}, \cite{FIK2003}, and \cite{DW2011}. In particular, the K\"ahler Ricci solitons in \cite{Cao1996} can be explicitly integrated (see Theorem 4.20 in \cite{DW2011} and the discussion prior to it). Interestingly, \cite{Wink2017} and \cite{Sto2017} have recently obtained existence results for non-K\"ahler solitons on these spaces (see also \cite{Appleton2017}). The failure of the Painlev\'e test suggests that the Ricci soliton equation on these complex line bundles falls short of full integrability. It would be interesting to determine whether the series expansions singled out by the Painlev\'e test yield an integrable subsystem where solutions correspond to these non-K\"ahlers steady solitons on complex line bundles.

\end{document}